\title[Well-posedness of Weak and Strong Solutions]{Well-posedness of Weak and Strong Solutions to the Kinetic Cucker-Smale Model}
\author[C. Jin]{Chunyin Jin}
\address[Chunyin Jin]{\newline BICMR, Peking University,\newline Beijing 100871, P. R. China.}
\email{jinchunyin@pku.edu.cn}
\newtheorem{theorem}{Theorem}[section]
\newtheorem{definition}{Definition}[section]
\newtheorem{lemma}{Lemma}[section]
\newtheorem{remark}{Remark}[section]
\newcommand{\bbr}{\mathbb R}
\newcommand{\e}{\varepsilon}
\newcommand{\bx}{\mbox{\boldmath $x$}}
\newcommand{\by}{\mbox{\boldmath $y$}}
\newcommand{\bv}{\mbox{\boldmath $v$}}
\newcommand{\bw}{\mbox{\boldmath $w$}}
\newcommand{\bz}{\mbox{\boldmath $z$}}
\begin{document}

\date{\today}

\keywords{ Well-posedness, strong solution, kinetic Cucker-Smale model, velocity averaging, noise}

\thanks{2010 Mathematics Subject Classification: 35B35, 35D35, 35Q83, 35Q92}

\begin{abstract}
We first prove the well-posedness of weak and strong solutions to the kinetic Cucker-Smale model in the Sobolev space with the initial data having compact velocity support. Then we study the kinetic Cucker-Smale model with noise. By introducing two weighted Hilbert spaces, we successfully establish the well-posedness of weak and strong solutions, respectively. Our proof is based on weighted energy estimates. Finally, we rigorously justify the vanishing noise limit.
\end{abstract}

\maketitle \centerline{\date}
%%%%%%%%%%%%%%%%%%%%%%%%%%%%%%%%%%%%%%%%%%%%%%%%%%%%%%%%%%%%%%%%%%%%%%%%%%%%%%%%%%
%
%                              Sect 1. Introduction
%
%%%%%%%%%%%%%%%%%%%%%%%%%%%%%%%%%%%%%%%%%%%%%%%%%%%%%%%%%%%%%%%%%%%%%%%%%%%%%%%%%%
\section{Introduction}
\setcounter{equation}{0}
In this paper, we study the well-posedness of weak and strong solutions to the following kinetic Cucker-Smale model
with or without noise:
\begin{equation} \label{eq-kine}
     \begin{dcases}
         f_t + \bv \cdot \nabla_{\bx} f+ \nabla_{\bv} \cdot (L[f]f)=\sigma \Delta_{\bv}f,\\
         f|_{t=0}=f_0(\bx,\bv),
     \end{dcases}
\end{equation}
where $L[f]$ is given by
\[
 L[f](t, \bx, \bv)=\int_{\bbr^{2d}}\varphi(|\bx-\by|)f(t, \by, \bv^*)(\bv^*-\bv)d \by d \bv^*.
\]
Here $f(t,\bx,\bv)$ is the density distribution function, $\varphi(\cdot)$ is a positive non-increasing function denoting the interaction kernel, and $\sigma$ represents the noise strength. For convenience, we suppose that
\[
 \max\{|\varphi|, |\varphi'|, |\varphi''|\} \le 1 \quad  \text{and} \quad 0 \le \sigma \le 1
\]
in the sequel.

Recently, the Cucker-Smale model and related models have attracted much attention from researchers in diverse fields, including biology, physics and mathematics. People wish to understand the mechanisms that lead to the collective behaviors, such as flocking of birds, schooling of fish and swarming of bacteria, by modeling, simulation and mathematical analysis. Among them, the Cucker-Smale model is a basic one used to describe flocking, which was put forward in 2007. Motivated by their pioneer work \cite{cucker2007emergent}, Ha et al \cite{ha2009simple} presented a complete analysis for flocking of the Cucker-Smale model by using the Lyapunov functional approach. Then they rigorously justified the mean-field limit from the particle model to the kinetic Cucker-Smale model. Later, Carrillo et al \cite{canizo2011well} give an elegant proof for the mean-field limit by employing the modern theory of optimal transport. In \cite{carrillo2010asymptotic}, they further refined the results in \cite{ha2009simple} and proved the unconditional flocking theorem for the measure-valued solutions to the kinetic Cucker-Smale model. Nowadays, studies of the Cucker-Smale model from particle to kinetic and hydrodynamic description have been launched. We refer the interested readers to \cite{ha2014global, ha2014hydrodynamic, ha2015emergent, jin2015well} and the references therein for the results related to hydrodynamic Cucker-Smale models and the review paper \cite{carrillo2010particle} for the state of the art in this research topic.

However, most mathematical models in this territory are just derived formally. The rigorous limits and  stabilities of many models are still unknown. Even though the stability for the kinetic Cucker-Smale model has been established in measure  space in \cite{canizo2011well} and \cite{ha2009simple}, however the corresponding results in regular function space are still lacking.  As far as we know, there is only existence theory for weak solutions in function space;see \cite{karper2013existence}. The proof of \cite{canizo2011well} and \cite{ha2009simple} are both based on the analysis to the characteristics under the condition that the initial data have compact support. In fact, this method can only deal with the kinetic Cucker-Smale model without noise. Now in the present paper, we will provide a frame that can establish the well-posedness of solutions to the kinetic Cucker-Smale model with or without noise, no matter whether the initial data have compact support or not.

It is well known that we can construct the admissible weak solutions to the hyperbolic conservation laws by using the vanishing viscosity approach; see \cite{bianchini2005vanishing}\cite{hopf1950partial}. Similarly, can we recover  weak and strong solutions to the kinetic Cucker-Smale model by the vanishing noise limit? This is another problem we are concerned with. By using the velocity averaging lemma and subtle mathematical analysis, we give a positive answer to this question. The reader can also refer to \cite{diperna1989global}\cite{diperna1989cauchy} for further application of the velocity averaging lemma in kinetic theory.

The rest of the paper is organised as follows. In section 2, we prove the well-posedness of weak and strong solutions to the kinetic Cucker-Smale model in the Sobolev space. In section 3, we mainly study the kinetic Cucker-Smale model with noise by introducing two weighted Hilbert spaces. While section 4 is devoted to the study of vanishing noise limit. In the last section, we summarise our paper and make a brief comment on it.
\vskip 0.3cm
\noindent\textbf{Notation}. Throughout the paper, we denote by $C$ a general positive constant, which may depend on the initial data. We write $C(\alpha)$ to emphasize that $C$ depends on $\alpha$. While $c(t)$ denotes a general positive function. Both $C$ and $c(t)$ may take different values in different expressions.
%%%%%%%%%%%%%%%%%%%%%%%%%%%%%%%%%%%%%%%%%%%%%%%%%%%%%%%%%%%%%%%%%%%%%%%%%%%%%%%%%%%%%%%%%%%%%%%%%%%
%
%         sect. 2  Kinetic Cucker-Smale Model without Noise
%
%%%%%%%%%%%%%%%%%%%%%%%%%%%%%%%%%%%%%%%%%%%%%%%%%%%%%%%%%%%%%%%%%%%%%%%%%%%%%%%%%%%%%%%%%%%%%%%%%%%
\section{Kinetic Cucker-Smale Model without Noise}
\setcounter{equation}{0}
In this section, we prove the well-posedness of weak and strong solutions to the kinetic Cucker-Smale without noise, i.e., $\sigma=0$ in \eqref{eq-kine}. Then the equation \eqref{eq-kine} reduces to
\begin{equation} \label{eq-kine-wtnois}
     \begin{dcases}
         f_t + \bv \cdot \nabla_{\bx} f+ \nabla_{\bv} \cdot (L[f]f)=0,\\
         f|_{t=0}=f_0(\bx,\bv),
     \end{dcases}
\end{equation}
In reality, the velocities of particles are finite. After taking the mean-field limit, it is natural to suppose that the initial density distribution function has compact velocity support. We study this situation first. The characteristics play a key role in our analysis. We mainly use them to estimate the velocity support of the density distribution function $f(t,\bx, \bv)$. Next we present the definition and results in this section.
\begin{definition}
Let $0 \le f(t,\bx, \bv)\in C([0,+\infty),L^1(\bbr^{2d}))$. $f(t,\bx, \bv)$ is a weak solution to \eqref{eq-kine-wtnois} if
\[
   f_t + \bv \cdot \nabla_{\bx} f+ \nabla_{\bv} \cdot (L[f]f)=0 \quad \text{in} \ \mathcal{D}'([0,+\infty)\times \bbr^{2d}).
\]
We say $f(t,\bx, \bv)$ is a strong solution if $f(t,\bx, \bv)$ is a weak solution and $f(t,\bx, \bv)\in C([0,+\infty),W^{1,1}(\bbr^{2d}))$.
\end{definition}

Denote
\[
  R(t)=\sup\{|\bv|: \ (\bx,\bv) \in \text{supp} f(t,\cdot,\cdot)\}.
\]
We have the following theorems.
\begin{theorem} \label{tm-wek}
Assume $R_0 >0$ and $0 \le f_0(\bx, \bv)\in W^{1,1}(\bbr^{2d}), \text{supp}_{\bv} f_0(\bx,\cdot)\subseteq B(R_0)$. Then \eqref{eq-kine-wtnois} admits a unique weak solution $f(t, \bx,\bv) \in C([0,+\infty),L^1(\bbr^{2d}))$ with its velocity support satisfying
\[
 R(t) \le R_0+\|f_0\|_{L^1(\bbr^{2d})}R_0 t.
\]
Moreover, it holds the following stability estimate
\[
 \sup_{0\le t \le T}\|f(t)-g(t)\|_{L^1(\bbr^{2d})}\le \|f_0-g_0\|_{L^1(\bbr^{2d})}\exp\left(\int_0^T c(t)dt\right), \quad \forall T\ge 0,
\]
where $f(t, \bx,\bv)$ and $g(t, \bx,\bv)$ are weak solutions with initial data $f_0$ and $g_0$ satisfying the above conditions, respectively.
\end{theorem}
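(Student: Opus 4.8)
The plan is to construct the solution by the method of characteristics together with a Picard iteration, and then to obtain the velocity-support bound and the $L^1$ stability estimate by energy estimates along the flow. First I would record the algebraic structure of the force. Splitting
\[
 L[f]=\bm[f](t,\bx)-a[f](t,\bx)\,\bv,\qquad a[f]=\int_{\bbr^{2d}}\varphi(|\bx-\by|)f\,d\by\,d\bv^*,\quad \bm[f]=\int_{\bbr^{2d}}\varphi(|\bx-\by|)f\,\bv^*\,d\by\,d\bv^*,
\]
shows that the field is affine in $\bv$, that $\nabla_{\bv}\cdot L[f]=-d\,a[f]$, and, using $\varphi\le 1$, that $0\le a[f]\le\|f\|_{L^1}$ and $|\bm[f]|\le R(t)\,\|f\|_{L^1}$. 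The characteristic system is $\dot{\bx}=\bv,\ \dot{\bv}=L[f](t,\bx,\bv)$, and since the equation is in conservative form, $f(t)$ is the pushforward of $f_0$ along this flow; integrating the equation in phase space gives mass conservation $\|f(t)\|_{L^1}=\|f_0\|_{L^1}=:M$.

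For existence I would run the iteration $\partial_t f^{(n+1)}+\bv\cdot\nabla_{\bx}f^{(n+1)}+\nabla_{\bv}\cdot(L[f^{(n)}]f^{(n+1)})=0$ with $f^{(n+1)}|_{t=0}=f_0$, each linear problem being solved explicitly by its (frozen-field) characteristics. I would establish the uniform-in-$n$ bounds — mass $\|f^{(n)}\|_{L^1}=M$, the support bound below, and propagation of the $W^{1,1}$ norm by differentiating the characteristic ODEs and applying Grönwall — and then show the iterates are Cauchy in $C([0,T],L^1)$ by the same difference estimate used for stability. Passing to the limit identifies a weak solution, and the uniform $W^{1,1}$ bound places it in $C([0,+\infty),W^{1,1}(\bbr^{2d}))$.

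For the support estimate I would track $|\bv(t)|$ along a characteristic: since
\[
 \frac{d}{dt}|\bv|=\frac{\bv\cdot\bm[f]}{|\bv|}-a[f]\,|\bv|\le|\bm[f]|-a[f]\,|\bv|\le a[f]\bigl(R(t)-|\bv|\bigr),
\]
the dissipative cancellation forces the outermost characteristics to satisfy $\tfrac{d}{dt}R(t)\le 0$, so $R(t)\le R_0$; feeding this back into $\tfrac{d}{dt}|\bv|\le|\bm[f]|\le M R(s)\le M R_0$ and integrating yields the stated bound $R(t)\le R_0+M R_0 t$. The only subtlety is differentiating the running supremum, which I would handle by working with the explicit flow and almost-everywhere differentiability.

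The stability estimate, which also delivers uniqueness, is the heart of the matter and the step I expect to be most delicate. Writing $h=f-g$ and using $L[f]f-L[g]g=L[f]h+(L[f]-L[g])g$, the difference solves a transport equation in the field $L[f]$ with source $-\nabla_{\bv}\cdot((L[f]-L[g])g)$. Multiplying by $\mathrm{sgn}(h)$ and integrating, the transport part collapses to $\tfrac{d}{dt}\|h\|_{L^1}$ because $\bv\cdot\nabla_{\bx}|h|$ and $\nabla_{\bv}\cdot(L[f]|h|)$ integrate to zero, while expanding the source with $\nabla_{\bv}g$ gives
\[
 \frac{d}{dt}\|h\|_{L^1}\le\Bigl(C(R)\,\|\nabla_{\bv}g\|_{L^1}+dM\Bigr)\|h\|_{L^1}=:c(t)\,\|h\|_{L^1},
\]
where I have used the Lipschitz bounds $|L[f]-L[g]|\le C(R)\|h\|_{L^1}$ and $|\nabla_{\bv}\cdot(L[f]-L[g])|=d\,|a[f]-a[g]|\le d\|h\|_{L^1}$; Grönwall then gives the claimed estimate. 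The obstacle is twofold: justifying the renormalization at merely $L^1$ regularity — which is why one must propagate the $W^{1,1}$ regularity of $g$ so that $\nabla_{\bv}g\in L^1$ and the source can be integrated by parts onto $g$ — and keeping all $\bv$-weighted constants finite, which is exactly where the compact velocity support hypothesis $\mathrm{supp}_{\bv}f_0(\bx,\cdot)\subseteq B(R_0)$ is indispensable.
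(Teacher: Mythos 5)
Your proposal is correct and follows essentially the same route as the paper: characteristics give positivity and the velocity-support bound, $\mathrm{sgn}$-weighted $L^1$ estimates plus Gr\"onwall give the $W^{1,1}$ propagation and the stability/uniqueness bound, and existence comes from an approximation scheme made Cauchy in $C([0,T],L^1(\bbr^{2d}))$ by that same stability estimate. The only real divergence is the support bound, where you exploit the dissipative structure $\frac{d}{dt}|\bv|\le a[f](R(t)-|\bv|)$ to obtain the stronger uniform-in-time bound $R(t)\le R_0$ (which the paper records only in a remark), whereas the paper reaches the stated linear-in-$t$ bound directly from the Duhamel formula for $V(t)$ together with the Cauchy--Schwarz estimate $|\mathbf{b}(t,\bx)|\le\|f_0\|_{L^1(\bbr^{2d})}R_0$ derived from the conserved kinetic energy.
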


\begin{theorem} \label{tm-str}
Assume $R_0 >0$ and $0 \le f_0(\bx, \bv)\in W^{2,1}(\bbr^{2d}), \text{supp}_{\bv} f_0(\bx,\cdot)\subseteq B(R_0)$. Then \eqref{eq-kine-wtnois} admits a unique strong solution $f(t, \bx,\bv) \in C([0,+\infty),W^{1,1}(\bbr^{2d}))$ with its velocity support satisfying
\[
 R(t) \le R_0+\|f_0\|_{L^1(\bbr^{2d})}R_0 t.
\]
Moreover, it holds the following stability estimate
\[
 \sup_{0\le t \le T}\|f(t)-g(t)\|_{W^{1,1}(\bbr^{2d})}\le \|f_0-g_0\|_{W^{1,1}(\bbr^{2d})}\exp\left(\int_0^T c(t)dt\right), \quad \forall T\ge 0,
\]
where $f(t, \bx,\bv)$ and $g(t, \bx,\bv)$ are strong solutions with initial data $f_0$ and $g_0$ satisfying the above conditions, respectively.
\end{theorem}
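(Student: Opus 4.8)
The plan is to build on the weak solution already produced by Theorem~\ref{tm-wek} and upgrade its regularity, so that the whole argument reduces to propagating derivatives. The starting observation is that the interaction term is affine in $\bv$: writing
\[
 a[f](t,\bx)=\int_{\bbr^{2d}}\varphi(|\bx-\by|)f(t,\by,\bv^*)\bv^*\,d\by\,d\bv^*,\qquad b[f](t,\bx)=\int_{\bbr^{2d}}\varphi(|\bx-\by|)f(t,\by,\bv^*)\,d\by\,d\bv^*,
\]
one has $L[f]=a[f]-b[f]\bv$, so \eqref{eq-kine-wtnois} becomes a linear transport equation $f_t+\bv\cdot\nabla_{\bx}f+(a[f]-b[f]\bv)\cdot\nabla_{\bv}f=d\,b[f]f$ whose transport field has divergence $-d\,b[f]$. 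Since $\max\{|\varphi|,|\varphi'|,|\varphi''|\}\le1$ and the velocity support obeys $R(t)\le R_0+\|f_0\|_{L^1}R_0t$ from Theorem~\ref{tm-wek}, the coefficients $a[f],b[f]$ together with all of their $\bx$-derivatives up to second order are bounded pointwise by a constant $C(\|f_0\|_{L^1},R(t))$ (using mass conservation $\|f(t)\|_{L^1}=\|f_0\|_{L^1}$), because every such $\bx$-derivative falls on $\varphi$ and never on $f$. This is the structural fact that makes the derivative estimates close.

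For existence of the strong solution I would differentiate the equation in $x_i$ and in $v_j$ and perform $L^1$ estimates on the first derivatives $\partial f$. Multiplying the equation for $\partial f$ by $\mathrm{sgn}(\partial f)$ and integrating, the transport part integrates to the divergence term and contributes $\le d\|b[f]\|_{\infty}\|\partial f\|_{L^1}$, while the remaining source terms are linear in $\nabla f$ (and in $f$) with coefficients bounded by $C(\|f_0\|_{L^1},R(t))$; crucially, no second derivative of $f$ appears at this order. Grönwall's inequality then yields a finite bound on $\|f(t)\|_{W^{1,1}}$ on every $[0,T]$, and time continuity is read off the equation, giving $f\in C([0,+\infty),W^{1,1})$. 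The same computation carried one order higher, which is where the hypothesis $f_0\in W^{2,1}$ enters, propagates a finite $\|f(t)\|_{W^{2,1}}$ bound on each $[0,T]$. To make these a~priori estimates rigorous I would run them on mollified initial data (or via difference quotients) and pass to the limit, identifying the limit with the weak solution of Theorem~\ref{tm-wek} through its $L^1$ uniqueness.

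For uniqueness and the stability estimate, let $f,g$ be two strong solutions and set $h=f-g$. Using that $L$ is linear in its argument, $L[f]f-L[g]g=L[f]h+L[h]g$, so $h$ solves $h_t+\bv\cdot\nabla_{\bx}h+\nabla_{\bv}\cdot(L[f]h)+\nabla_{\bv}\cdot(L[h]g)=0$. An $L^1$ estimate on $h$ and on its first derivatives, exploiting that $L[h]=a[h]-b[h]\bv$ and its $\bx$-derivatives are all bounded by $C(R(t))\|h\|_{L^1}$, should produce a closed Grönwall inequality for $\|h(t)\|_{W^{1,1}}$, yielding the stated exponential bound and, on taking $f_0=g_0$, uniqueness.

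The main obstacle is the coupling term $\nabla_{\bv}\cdot(L[h]g)$ in the stability estimate. Differentiating it in $x_i$ or $v_j$ generates the term $L[h]\cdot\nabla_{\bv}\partial g$, which involves \emph{second} derivatives of the reference solution $g$; these cannot be integrated away against $\mathrm{sgn}(\partial h)$, so the only available route is the crude bound $\int|L[h]\cdot\nabla_{\bv}\partial g|\le\|L[h]\|_{\infty}\|g\|_{W^{2,1}}\le C(R(t))\|h\|_{L^1}\|g\|_{W^{2,1}}$. This is exactly why the theorem hypothesizes $W^{2,1}$ initial data and why the $W^{2,1}$ bound of the previous step must be secured first: $W^{1,1}$ stability of the solution map is governed by a $W^{2,1}$ norm of one of the two solutions. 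Equally essential is the compact velocity support, which is what keeps $|\bv^*|$, and hence $a[\cdot]$, uniformly bounded and integrable inside the nonlocal terms.
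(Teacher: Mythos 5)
Your proposal is correct and follows essentially the same route as the paper: $L^1$ estimates on first and second derivatives via sign multipliers and Gr\"onwall, the velocity-support bound from the characteristics, mollification of the data to make the estimates rigorous, and a $W^{1,1}$ stability estimate that closes only because the reference solution is controlled in $W^{2,1}$ (exactly the terms $\|\nabla_{\bx}\nabla_{\bv}g\|_{L^1}\|\bar h\|_{L^1}$ and $\|\nabla_{\bv}^2 g\|_{L^1}\|\bar h\|_{L^1}$ in the paper's Lemma \ref{lm-str}). Your explicit affine decomposition $L[f]=a[f]-b[f]\bv$ is just a cleaner bookkeeping of the same coefficient bounds the paper uses implicitly.
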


\begin{remark}
We can refine the estimate of $R(t)$ by using the particle method as in \cite{carrillo2010asymptotic}. In fact, it is uniformly bounded in time.
\end{remark}

\begin{remark}
We can also establish the well-posedness of classical solutions by using the same method and the Sobolev embedding if we improve the regularity of initial data.
\end{remark}

In the following subsection, we derive some a priori estimates that are needed in our proof.

\subsection{A priori estimates}
\begin{lemma} \label{lm-prlm}
Assume $R_0 >0$ and $ f_0(\bx, \bv)\ge 0, \text{supp}_{\bv} f_0(\bx,\cdot)\subseteq B(R_0)$. If $f(t, \bx,\bv)$ is a smooth solution to \eqref{eq-kine-wtnois}, then
  \begin{eqnarray*}
    &&(1)~\|f(t)\|_{L^{1}(\bbr^{2d})}=\|f_0\|_{L^{1}(\bbr^{2d})}, \quad \forall t \ge 0;\\
    &&(2)~\int_{\bbr^{2d}}f(t, \bx,\bv) \bv^2 d \bx d \bv \le \int_{\bbr^{2d}}f_0(\bx,\bv) \bv^2 d \bx d \bv, \quad \forall t \ge 0; \\
    &&(3)~R(t) \le R_0+\|f_0\|_{L^1(\bbr^{2d})}R_0 t, \quad \forall t \ge 0.
  \end{eqnarray*}
\end{lemma}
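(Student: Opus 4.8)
The plan is to exploit that \eqref{eq-kine-wtnois} is a linear transport equation along the characteristic flow $(X,V)(t)$ generated by the self-consistent field $(\bv,\,L[f])$. First I would record the algebraic form $L[f](t,\bx,\bv)=\bm(t,\bx)-a(t,\bx)\bv$, where $a(t,\bx)=\int\varphi(|\bx-\by|)f\,d\by d\bv^*$ and $\bm(t,\bx)=\int\varphi(|\bx-\by|)f\,\bv^*\,d\by d\bv^*$; since $0\le\varphi\le 1$ one has $0\le a\le\|f\|_{L^1}$, so $L[f]$ is affine in $\bv$ with bounded coefficients and the characteristics are globally defined, with the support merely transported (hence compactly supported in $\bv$ for each finite $t$). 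Because $\nabla_\bv\cdot L[f]=-d\,a(t,\bx)$, the equation reads $\tfrac{d}{dt}f(t,X,V)=d\,a(t,X)\,f$ along trajectories, so $f_0\ge0$ forces $f\ge0$ for all $t$. For part (1) I would then integrate the conservation form $f_t+\nabla_\bx\cdot(\bv f)+\nabla_\bv\cdot(L[f]f)=0$ over $\bbr^{2d}$: the $\bx$-flux vanishes by integrability in $\bx$ and the $\bv$-flux vanishes by the divergence theorem thanks to compact $\bv$-support, giving $\tfrac{d}{dt}\|f(t)\|_{L^1}=0$.

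For part (2) I would multiply the equation by $|\bv|^2$ and integrate. The transport term $|\bv|^2\,\bv\cdot\nabla_\bx f=\nabla_\bx\cdot(|\bv|^2\bv f)$ is a pure $\bx$-divergence and drops out, while integrating the force term by parts in $\bv$ produces $2\int\bv\cdot L[f]f\,d\bx d\bv$. The decisive step is to write this as a double integral and symmetrize under the exchange $(\bx,\bv)\leftrightarrow(\by,\bv^*)$: since $\varphi(|\bx-\by|)$ is symmetric and $f\ge0$,
\[
\int\bv\cdot L[f]f\,d\bx d\bv=-\tfrac12\int_{\bbr^{4d}}\varphi(|\bx-\by|)f(\bx,\bv)f(\by,\bv^*)\,|\bv-\bv^*|^2\,d\bx d\bv d\by d\bv^*\le0,
\]
whence $\tfrac{d}{dt}\int f|\bv|^2\,d\bx d\bv\le 0$ and the claim follows by integration in time.

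For part (3) I would track $|V(t)|$ along characteristics. Using $\dot V=L[f]=\bm-aV$ with $a\ge0$ gives $\tfrac{d}{dt}|V|\le\frac{V\cdot\bm}{|V|}\le|\bm|\le R(t)\|f_0\|_{L^1}$, since $|\bv^*|\le R(t)$ on the support bounds $|\bm|$. Sharpening this with the pointwise inequality $\bv\cdot(\bv^*-\bv)\le|\bv|\bigl(R(t)-|\bv|\bigr)$ yields instead $\tfrac{d}{dt}|V|\le\|f_0\|_{L^1}\bigl(R(t)-|V|\bigr)$; taking the supremum over trajectories and integrating the resulting differential inequality for $R(t)$ delivers the stated bound (and in fact the monotonicity $R(t)\le R_0$, consistent with the remark that $R(t)$ is uniformly bounded in time).

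I expect part (3) to be the main obstacle. The quantity $R(t)$ is defined as a supremum and is only Lipschitz, so differentiating it requires an envelope (Danskin-type) argument at the times where the maximal speed is attained, and the estimate is self-referential because its right-hand side again involves $R(t)$. Closing it rigorously needs a continuity/bootstrap argument on each interval $[0,T]$, using the a priori boundedness of $L[f]=\bm-a\bv$ to guarantee that the flow, the transported support, and hence $R(t)$ vary continuously; parts (1) and (2) then enter only through the uniform control $\|f(t)\|_{L^1}=\|f_0\|_{L^1}$ that makes $a$ and $\bm$ bounded.
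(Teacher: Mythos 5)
Your parts (1) and (2) follow the paper's proof essentially verbatim: integration of the conservation form for mass, positivity via the characteristic formula $f(t,X,V)=f_0\exp\bigl(d\int_0^t a\bigr)$, and the symmetrization identity turning $2\int \bv\cdot L[f]f$ into $-\int\varphi f f|\bv-\bv^*|^2\le 0$ (the paper records this as the energy identity \eqref{eq-engcsv}). Part (3) is where you diverge, and where you make life harder than necessary. The paper never writes a self-referential inequality in $R(t)$: it solves $\dot V=\mathbf b-aV$ by the integrating factor to get \eqref{eq-exnV}, and then bounds $|\mathbf b(t,\bx)|\le\bigl(\int f\bigr)^{1/2}\bigl(\int f|\bv^*|^2\bigr)^{1/2}\le\|f_0\|_{L^1}R_0$ by Cauchy--Schwarz, using part (1) and the second-moment decay from part (2) together with $\int f_0|\bv|^2\le\|f_0\|_{L^1}R_0^2$. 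This gives a \emph{time-independent} bound on $\mathbf b$, and the linear-in-$t$ estimate drops out of \eqref{eq-exnV} with no Danskin-type envelope argument and no bootstrap. Your first bound $|\mathbf b|\le R(t)\|f_0\|_{L^1}$ via the support is genuinely weaker: fed into Gronwall it only yields $R(t)\le R_0e^{\|f_0\|_{L^1}t}$, not the stated linear bound, so you are forced into the sharpened inequality $\frac{d}{dt}|V|\le\|f_0\|_{L^1}(R(t)-|V|)$ and the attendant Lipschitz-envelope machinery. That route is correct and in fact proves the stronger uniform bound $R(t)\le R_0$ (this is exactly the particle-method refinement the paper defers to Remark 2.1), but for the lemma as stated the Cauchy--Schwarz step is the missing shortcut that makes part (3) a two-line consequence of parts (1) and (2).
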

\begin{proof}
(1) Direct integrating \eqref{eq-kine-wtnois}-1 over $\bbr^{2d}$ gives the conclusion.

(2) Multiplying \eqref{eq-kine-wtnois}-1 by $\bv^2$, we obtain
\begin{equation} \label{eq-eneg}
 \frac{\partial}{\partial t}(f \bv^2) + \bv \cdot \nabla_{\bx} (f \bv^2)+ \nabla_{\bv} \cdot (L[f](f \bv^2))=2fL[f]\cdot \bv.
\end{equation}
Integrating \eqref{eq-eneg} over $[0,t]\times \bbr^{2d}$ yields
\begin{equation} \label{eq-engcsv}
 \begin{gathered}
   \int_{\bbr^{2d}}f(t, \bx,\bv) \bv^2 d \bx d \bv+\int_0^t\int_{\bbr^{4d}} \varphi(|\bx-\by|)f(s, \by,\bv^*)f(s, \bx,\bv)(\bv^*-\bv)^2 d \by d \bv^* d \bx d \bv ds \\=\int_{\bbr^{2d}}f_0(\bx,\bv) \bv^2 d \bx d \bv.
 \end{gathered}
\end{equation}
Then we prove $f \ge 0$ by the method of characteristics. Define $(X(t;\bx_0,\bv_0),V(t;\bx_0,\bv_0))$ as the characteristic issuing from $(\bx_0,\bv_0)$. It satisfies
\begin{equation} \label{eq-charac}
\begin{dcases}
  \frac{d X}{d t}=V, \\
  \frac{d V}{d t}=\int_{\bbr^{2d}} \varphi(|X-\by|)f(t, \by,\bv^*)(\bv^*-V)d \by d \bv^*,
\end{dcases}
\end{equation}
Define
\[
 a(t,\bx):=\int_{\bbr^{2d}} \varphi(|\bx-\by|)f(t, \by,\bv^*) d \by d \bv^*,
\]
\[
 \mathbf{b} (t,\bx):=\int_{\bbr^{2d}} \varphi(|\bx-\by|)f(t, \by,\bv^*) \bv^* d \by d \bv^*.
\]
Solving the equation \eqref{eq-kine-wtnois}, we have
\[
 f(t,X(t;\bx_0,\bv_0),V(t;\bx_0,\bv_0))=f_0(\bx_0,\bv_0)\exp \left(d \int_0^t  a(\tau,X(\tau)) d \tau \right)\ge 0.
\]
Combining with \eqref{eq-engcsv}, we know
\[
 \int_{\bbr^{2d}}f(t, \bx,\bv) \bv^2 d \bx d \bv \le \int_{\bbr^{2d}}f_0(\bx,\bv) \bv^2 d \bx d \bv, \quad \forall t \ge 0.
\]
(3) It follows from the characteristic equation \eqref{eq-charac} that
\begin{equation} \label{eq-exnV}
 V(t)=V_0 e^{-\int_0^t  a(\tau,X(\tau)) d \tau}+e^{-\int_0^t  a(\tau,X(\tau)) d \tau }\int_0^t \mathbf{b}(\tau,X(\tau)) e^{\int_0^{\tau}  a(s,X(s)) ds} d \tau.
\end{equation}
Using the Cauchy's inequality, we have
\begin{equation}
 \begin{aligned}
   |\mathbf{b}(t,\bx)|\le & \left(\int_{\bbr^{2d}} f(t, \by,\bv^*) d \by d \bv^* \right)^{\frac12}\left(\int_{\bbr^{2d}} f(t, \by,\bv^*) |\bv^*|^2 d \by d \bv^* \right)^{\frac12}\\
   \le &\|f_0\|_{L^1(\bbr^{2d})}R_0 .
 \end{aligned}
\end{equation}
From \eqref{eq-exnV}, we deduce that
\[
 R(t) \le R_0+\|f_0\|_{L^1(\bbr^{2d})}R_0 t, \quad \forall t \ge 0.
\]
This completes the proof.
\end{proof}

\begin{lemma} \label{lm-wek}
Assume $R_0 >0$ and $ 0\le f_0(\bx, \bv) \in W^{1,1}(\bbr^{2d}), \text{supp}_{\bv} f_0(\bx,\cdot)\subseteq B(R_0)$. If $f(t, \bx,\bv)$ is a smooth solution to \eqref{eq-kine-wtnois}, then
  \begin{eqnarray*}
    &&(1)~\|f(t)\|_{W^{1,1}(\bbr^{2d})} \le \|f_0\|_{W^{1,1}(\bbr^{2d})} \exp\left(\int_0^t c(\tau)d \tau\right), \quad \forall t \ge 0;\\
    &&(2)~\sup_{0\le t \le T}\|f(t)-g(t)\|_{L^{1}(\bbr^{2d})}\le \|f_0-g_0\|_{L^{1}(\bbr^{2d})}\exp\left(\int_0^T c(t)dt\right), \quad \forall T\ge 0.\\
  \end{eqnarray*}
where $f(t, \bx,\bv)$ and $g(t, \bx,\bv)$ are smooth solutions with initial data $f_0$ and $g_0$ satisfying the above conditions, respectively.
\end{lemma}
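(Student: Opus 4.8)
The plan is to recast \eqref{eq-kine-wtnois} in transport (nondivergence) form and then run coupled $L^1$ estimates on $f$ and its first derivatives, letting the compact velocity support from Lemma~\ref{lm-prlm} control every velocity-weighted term. Writing $L[f] = \mathbf{b}(t,\bx) - a(t,\bx)\bv$ with $a,\mathbf{b}$ as in Lemma~\ref{lm-prlm}, one has $\nabla_\bv\cdot L[f] = -d\,a$, so that \eqref{eq-kine-wtnois} becomes
\[
 f_t + \bv\cdot\nabla_\bx f + L[f]\cdot\nabla_\bv f = d\,a\,f .
\]
Differentiating this identity in $v_i$ (using $\partial_{v_i}\bv=\mathbf{e}_i$ and $\partial_{v_i}L[f]=-a\,\mathbf{e}_i$) and in $x_i$, the quantities $p_i:=\partial_{v_i}f$ and $q_i:=\partial_{x_i}f$ satisfy
\[
 (p_i)_t + \bv\cdot\nabla_\bx p_i + L[f]\cdot\nabla_\bv p_i = (d+1)\,a\,p_i - q_i ,
\]
\[
 (q_i)_t + \bv\cdot\nabla_\bx q_i + L[f]\cdot\nabla_\bv q_i = d\,a\,q_i + d(\partial_{x_i}a)f - (\partial_{x_i}L[f])\cdot\nabla_\bv f .
\]

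The backbone of part (1) is the elementary $L^1$ identity for the common transport operator: if $h_t + \bv\cdot\nabla_\bx h + L[f]\cdot\nabla_\bv h = S$, then multiplying by $\mathrm{sgn}(h)$ and integrating gives
\[
 \frac{d}{dt}\|h\|_{L^1} = -d\int a\,|h|\,d\bx d\bv + \int \mathrm{sgn}(h)\,S\,d\bx d\bv ,
\]
because $\nabla_\bx\cdot\bv=0$ annihilates the $x$-transport and $\nabla_\bv\cdot L[f]=-d\,a$ produces the first term. I would then bound the sources. From $|\varphi|,|\varphi'|\le1$ and mass conservation one has $\|a\|_\infty\le\|f_0\|_{L^1}$ and $\|\partial_{x_i}a\|_\infty\le\|f_0\|_{L^1}$; the term $d(\partial_{x_i}a)f$ contributes at most $d\|f_0\|_{L^1}\|f\|_{L^1}$, which absorbs into $c(t)\|f\|_{W^{1,1}}$. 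The only genuinely dangerous contribution is $(\partial_{x_i}L[f])\cdot\nabla_\bv f = (\partial_{x_i}\mathbf{b}-(\partial_{x_i}a)\bv)\cdot\nabla_\bv f$, which carries the factor $\bv\,\nabla_\bv f$. Here the velocity support is decisive: since $\nabla_\bv f$ is supported in $\{|\bv|\le R(t)\}$ one gets $\int|\bv|\,|\nabla_\bv f|\le R(t)\|\nabla_\bv f\|_{L^1}$, and similarly $\|\partial_{x_i}\mathbf{b}\|_\infty\le R(t)\|f_0\|_{L^1}$. Summing the estimates for $f,p_i,q_i$ and using $\tfrac{d}{dt}\|f\|_{L^1}=0$ yields $\tfrac{d}{dt}\|f\|_{W^{1,1}}\le c(t)\|f\|_{W^{1,1}}$ with $c(t)$ depending only on $\|f_0\|_{L^1}$ and $R(t)$; Grönwall finishes part (1).

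For part (2), set $w=f-g$. By bilinearity of $L$ one has $L[f]f-L[g]g=L[f]w+L[w]g$, so, writing $a[\cdot]$ for the functional $a$ with the indicated argument,
\[
 w_t + \bv\cdot\nabla_\bx w + L[f]\cdot\nabla_\bv w = d\,a[f]\,w + d\,a[w]\,g - L[w]\cdot\nabla_\bv g .
\]
Applying the same $L^1$ identity, the $d\,a[f]\,w$ source cancels exactly against the $-d\int a[f]|w|$ transport term, leaving $\tfrac{d}{dt}\|w\|_{L^1}\le d\int|a[w]|\,|g| + \int|L[w]|\,|\nabla_\bv g|$. Now $\|a[w]\|_\infty\le\|w\|_{L^1}$, while the pointwise bound $|L[w](\bx,\bv)|\le(R(t)+|\bv|)\|w\|_{L^1}$ together with $\int|\bv|\,|\nabla_\bv g|\le R(t)\|\nabla_\bv g\|_{L^1}$ (common velocity support) gives $\int|L[w]|\,|\nabla_\bv g|\le 2R(t)\|w\|_{L^1}\|\nabla_\bv g\|_{L^1}$, where $\|\nabla_\bv g\|_{L^1}$ is already controlled by part (1) applied to $g$. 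Hence $\tfrac{d}{dt}\|w\|_{L^1}\le c(t)\|w\|_{L^1}$, and Grönwall delivers the stated stability bound.

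The step I expect to be the main obstacle is not the Grönwall bookkeeping but the control of the velocity-weighted terms $\int|\bv|\,|\nabla_\bv f|$ and the $\mathbf{b}$-type quantities: these are not dominated by the $W^{1,1}$ or $L^1$ norms alone and would blow up without the linear-in-time support estimate $R(t)\le R_0+\|f_0\|_{L^1}R_0 t$ of Lemma~\ref{lm-prlm}. A secondary technical point is justifying the $\mathrm{sgn}$-multiplier computation at the zero set of $h$; for smooth solutions I would regularize by $h/\sqrt{h^2+\e^2}$ and pass to the limit $\e\to0$, which reproduces the identity above.
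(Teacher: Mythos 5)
Your proof is correct and follows essentially the same route as the paper: differentiate the equation in $\bx$ and in $\bv$, run coupled $L^1$ estimates with sign multipliers, use the velocity-support bound $R(t)$ from Lemma~\ref{lm-prlm} to control the $\bv$-weighted terms, and close with Gr\"onwall, and the stability part likewise matches the paper's treatment of $\bar{h}=f-g$ (same cancellation of the $a[f]$ term, same reliance on the part-(1) bound for $\|\nabla_{\bv}g\|_{L^1}$). The only difference is presentational: you pass to nondivergence form via the decomposition $L[f]=\mathbf{b}-a\bv$, while the paper keeps the divergence form, but the resulting differential inequalities are identical.
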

\begin{proof}
Applying $\nabla_{\bx}$ to \eqref{eq-kine-wtnois}-1, we have
\begin{equation} \label{eq-wtns-dx}
 (\nabla_{\bx} f)_t +\bv \cdot \nabla_{\bx} (\nabla_{\bx} f)+ \nabla_{\bv} \cdot (L[f]\otimes \nabla_{\bx} f)=-\nabla_{\bx}L[f] \cdot \nabla_{\bv} f- f\nabla_{\bx} \nabla_{\bv} \cdot L[f].
\end{equation}
Multiplying \eqref{eq-wtns-dx} by $(sgn(\partial_{x_1}f),sgn(\partial_{x_2}f),\cdots,sgn(\partial_{x_d}f))$, and integrating the resulting equation over $\bbr^{2d}$, we obtain
\begin{equation} \label{eq-wtns-dxint}
 \frac{d}{dt}\|\nabla_{\bx} f\|_{L^{1}(\bbr^{2d})} \le c(t)\|\nabla_{\bv} f\|_{L^{1}(\bbr^{2d})}+3 \|f_0\|_{L^{1}(\bbr^{2d})} \|f\|_{L^{1}(\bbr^{2d})},
\end{equation}
where we have used Lemma \ref{lm-prlm}.
Applying $\nabla_{\bv}$ to \eqref{eq-kine-wtnois}-1, we obtain
\begin{equation} \label{eq-wtns-dv}
 (\nabla_{\bv} f)_t +\bv \cdot \nabla_{\bx} (\nabla_{\bv} f)+ \nabla_{\bv} \cdot (L[f]\otimes \nabla_{\bv} f)=-\nabla_{\bv}L[f] \cdot \nabla_{\bv} f- \nabla_{\bx} f.
\end{equation}
Similarly, we have
\begin{equation} \label{eq-wtns-dvint}
 \frac{d}{dt}\|\nabla_{\bv} f\|_{L^{1}(\bbr^{2d})} \le \|f_0\|_{L^{1}(\bbr^{2d})} \|\nabla_{\bv} f\|_{L^{1}(\bbr^{2d})}+\|\nabla_{\bx} f\|_{L^{1}(\bbr^{2d})}.
\end{equation}
Adding \eqref{eq-wtns-dxint} to \eqref{eq-wtns-dvint} and using the fact that $ \frac{d}{dt}\|f\|_{L^{1}(\bbr^{2d})}=0$, we arrive at
\begin{equation} \label{eq-wtns-dvint}
 \frac{d}{dt}\| f(t)\|_{W^{1,1}(\bbr^{2d})} \le c(t) \|f(t)\|_{W^{1,1}(\bbr^{2d})}.
\end{equation}
Following from the Gronwall's inequality, we deduce that
\begin{equation} \label{eq-w1nm}
 \|f(t)\|_{W^{1,1}(\bbr^{2d})} \le \|f_0\|_{W^{1,1}(\bbr^{2d})} \exp\left(\int_0^t c(\tau)d \tau\right), \quad \forall t \ge 0.
\end{equation}
(2) For two smooth solutions $f(t,\bx,\bv)$ and $g(t,\bx,\bv)$ with initial data $f_0$ and $g_0$, respectively. We define
\[
 \bar{h}:=f-g
\]
It follows from the equation \eqref{eq-kine-wtnois} that
\begin{equation} \label{eq-kine-dfr}
 \bar{h}_t + \bv \cdot \nabla_{\bx} \bar{h}+ \nabla_{\bv} \cdot (L[f]\bar{h})=-g \nabla_{\bv} \cdot L[\bar{h}]-L[\bar{h}] \cdot \nabla_{\bv}g.
\end{equation}
Multiplying \eqref{eq-kine-dfr} by $sgn(\bar{h})$ and integrating the resulting equation over $\bbr^{2d}$, we obtain
\begin{equation} \label{eq-kine-dfrnm}
 \frac{d}{dt}\|\bar{h}\|_{L^{1}(\bbr^{2d})} \le C \|g\|_{L^{1}(\bbr^{2d})}\| \bar{h}\|_{L^{1}(\bbr^{2d})}+c(t) \|\nabla_{\bv}g\|_{L^{1}(\bbr^{2d})} \| \bar{h}\|_{L^{1}(\bbr^{2d})}.
\end{equation}
Combining with \eqref{eq-w1nm} and solving the above Gronwall's inequality, we have
\begin{equation} \label{eq-wstb}
\sup_{0\le t \le T}\|f(t)-g(t)\|_{L^{1}(\bbr^{2d})}\le \|f_0-g_0\|_{L^{1}(\bbr^{2d})}\exp\left(\int_0^T c(t)dt\right), \quad \forall T\ge 0.
\end{equation}
\end{proof}

\begin{lemma} \label{lm-str}
Assume $R_0 >0$ and $ 0\le f_0(\bx, \bv) \in W^{2,1}(\bbr^{2d}), \text{supp}_{\bv} f_0(\bx,\cdot)\subseteq B(R_0)$. If $f(t, \bx,\bv)$ is a smooth solution to \eqref{eq-kine-wtnois}, then
  \begin{eqnarray*}
    &&(1)~\|f(t)\|_{W^{2,1}(\bbr^{2d})} \le \|f_0\|_{W^{2,1}(\bbr^{2d})} \exp\left(\int_0^t c(\tau)d \tau\right), \quad \forall t \ge 0;\\
    &&(2)~\sup_{0\le t \le T}\|f(t)-g(t)\|_{W^{1,1}(\bbr^{2d})}\le \|f_0-g_0\|_{W^{1,1}(\bbr^{2d})}\exp\left(\int_0^T c(t)dt\right), \quad \forall T\ge 0.
  \end{eqnarray*}
where $f(t, \bx,\bv)$ and $g(t, \bx,\bv)$ are smooth solutions with initial data $f_0$ and $g_0$ satisfying the above conditions, respectively.
\end{lemma}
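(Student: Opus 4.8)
The plan is to mimic the proof of Lemma \ref{lm-wek} one order of regularity higher, exploiting the fact that the interaction term $L[f]$ is affine in $\bv$. Concretely, for part (1) I would apply each of the three second-order operators $\nabla_{\bx}^2$, $\nabla_{\bx}\nabla_{\bv}$, $\nabla_{\bv}^2$ to \eqref{eq-kine-wtnois}-1 (equivalently, apply one more derivative to \eqref{eq-wtns-dx} and \eqref{eq-wtns-dv}). Each second derivative $w:=D^2 f$ then satisfies a transport equation of the form $w_t+\bv\cdot\nabla_{\bx}w+\nabla_{\bv}\cdot(L[f]w)=S$, where the source $S$ collects the commutator $\nabla_{\bx}f$ produced when $\nabla_{\bv}$ meets $\bv\cdot\nabla_{\bx}$ together with the terms generated by differentiating the nonlocal part $\nabla_{\bv}\cdot(L[f]f)$. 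Multiplying by $\mathrm{sgn}(w)$ componentwise and integrating over $\bbr^{2d}$, the transport and drift terms vanish because they are in divergence form (the term $\bv\cdot\nabla_{\bx}w$ yields $\bv\cdot\nabla_{\bx}|w|$, and $\mathrm{sgn}(w)\nabla_{\bv}\cdot(L[f]w)=\nabla_{\bv}\cdot(L[f]|w|)$), leaving $\frac{d}{dt}\|w\|_{L^1(\bbr^{2d})}\le\|S\|_{L^1(\bbr^{2d})}$.

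The crux is then to bound $\|S\|_{L^1(\bbr^{2d})}$ by $c(t)\|f\|_{W^{2,1}(\bbr^{2d})}$. Here I would use the structural observation that $\nabla_{\bv}L[f]=-a(t,\bx)I$ and $\nabla_{\bv}^2 L[f]=0$, so that only $\bx$-derivatives of $L[f]$ survive; these involve at most $\varphi''$ and are controlled pointwise by $\|f\|_{L^1(\bbr^{2d})}$ and the velocity radius $R(t)$ through $\|\nabla_{\bx}L[f]\|_{L^\infty}\le c(t)$ and $\|\nabla_{\bx}^2 L[f]\|_{L^\infty}\le c(t)$, using $\max\{|\varphi|,|\varphi'|,|\varphi''|\}\le1$ and Lemma \ref{lm-prlm}. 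Every term in $S$ is therefore a product of a derivative of $L[f]$ of order at most two (bounded) with a derivative of $f$ of order at most two, so $\|S\|_{L^1(\bbr^{2d})}\le c(t)\|f\|_{W^{2,1}(\bbr^{2d})}$; the first-order pieces are moreover already controlled by Lemma \ref{lm-wek}. Summing the three estimates together with the $W^{1,1}$ bound yields $\frac{d}{dt}\|f\|_{W^{2,1}(\bbr^{2d})}\le c(t)\|f\|_{W^{2,1}(\bbr^{2d})}$, and Gronwall's inequality gives (1).

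For part (2) I would differentiate the difference equation \eqref{eq-kine-dfr} for $\bar h=f-g$ once in $\bx$ and once in $\bv$, apply the same $\mathrm{sgn}$-and-integrate procedure, and add the resulting inequalities to the $L^1$ estimate already obtained in Lemma \ref{lm-wek}(2). Differentiating the right-hand side $-g\nabla_{\bv}\cdot L[\bar h]-L[\bar h]\cdot\nabla_{\bv}g$ produces terms in which one derivative falls on $g$ (up to second order, e.g.\ $\nabla_{\bx}\nabla_{\bv}g$ and $\nabla_{\bv}^2 g$) while the remaining factor is a derivative of $L[\bar h]$ of order at most one, bounded by $\|\bar h\|_{L^1(\bbr^{2d})}$ and $R(t)$; note that $\nabla_{\bv}(\nabla_{\bv}\cdot L[\bar h])=0$, which removes the only potentially dangerous term. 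Since $g\in W^{2,1}(\bbr^{2d})$ is controlled uniformly in time by part (1), each such term is bounded by $c(t)\|\bar h\|_{W^{1,1}(\bbr^{2d})}$, whence $\frac{d}{dt}\|\bar h\|_{W^{1,1}(\bbr^{2d})}\le c(t)\|\bar h\|_{W^{1,1}(\bbr^{2d})}$ and Gronwall yields the stated stability estimate.

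The main obstacle is the bookkeeping in part (1): one must verify that no source term requires more than two derivatives of $\varphi$ and that every term closes in the $W^{2,1}$ norm. This is exactly where the affine dependence of $L[f]$ on $\bv$ is essential—it forces $\nabla_{\bv}^2 L[f]=0$ and confines all $\varphi$-differentiation to the $\bx$-variable, capping it at $\varphi''$ in the second-order equations. Once this is checked, the remaining estimates are routine applications of Lemmas \ref{lm-prlm} and \ref{lm-wek} together with Gronwall's inequality.
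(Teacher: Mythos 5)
Your proposal is correct and follows essentially the same route as the paper: apply the three second-order operators $\partial_{x_i}\partial_{x_j}$, $\partial_{x_i}\partial_{v_j}$, $\partial_{v_i}\partial_{v_j}$ (resp.\ differentiate the difference equation once in $\bx$ and $\bv$ for part (2)), multiply by the componentwise sign, integrate, exploit that $L[f]$ is affine in $\bv$ so that $\nabla_{\bv}^2L[f]=0$ and all $\varphi$-differentiation stops at $\varphi''$, and close with Gronwall using Lemmas \ref{lm-prlm} and \ref{lm-wek}. The only cosmetic difference is that you organize the source terms structurally while the paper writes out each second-order equation explicitly; the estimates are the same.
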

\begin{proof}
Based on Lemma \ref{lm-wek}, we only need to estimate the second-order derivatives. Applying $\partial_{x_i}\partial_{x_j}$ to \eqref{eq-kine-wtnois}-1, we obtain
\begin{equation} \label{eq-kine2dx}
 \begin{aligned}
   (\partial_{x_i}\partial_{x_j}f)_t &+ \bv \cdot \nabla_{\bx} (\partial_{x_i}\partial_{x_j}f)+ \nabla_{\bv} \cdot (L[f] \partial_{x_i}\partial_{x_j}f)
   \\=&-\partial_{x_j}L[f] \cdot \nabla_{\bv} \partial_{x_i} f- \partial_{x_i} f \partial_{x_j} \nabla_{\bv} \cdot L[f]-\partial_{x_i} \partial_{x_j} L[f]\cdot \nabla_{\bv} f
   \\&-\partial_{x_i}  L[f]\cdot \nabla_{\bv} \partial_{x_j}f -\partial_{x_j} f \partial_{x_i} \nabla_{\bv} \cdot L[f]- f \partial_{x_i} \partial_{x_j}\nabla_{\bv} \cdot L[f].
 \end{aligned}
\end{equation}
Multiplying \eqref{eq-kine2dx} by $sgn(\partial_{x_i}\partial_{x_j}f)$ and integrating the resulting equation over $\bbr^{2d}$, we get
\begin{equation} \label{eq-kine2dxit}
 \begin{aligned}
  \frac{d}{dt} \|\partial_{x_i}\partial_{x_j}f\|_{L^1(\bbr^{2d})}\le &c(t)\|f\|_{L^1(\bbr^{2d})}
  \|\nabla_{\bv}\partial_{x_i}f\|_{L^1(\bbr^{2d})}+C\|\partial_{x_i}f\|_{L^1(\bbr^{2d})}\|f\|_{L^1(\bbr^{2d})}\\
  &+c(t)\|f\|_{L^1(\bbr^{2d})}  \|\nabla_{\bv}f\|_{L^1(\bbr^{2d})}
  +c(t)\|f\|_{L^1(\bbr^{2d})}\|\nabla_{\bv}\partial_{x_j}f\|_{L^1(\bbr^{2d})}\\
  &+C\|\partial_{x_j}f\|_{L^1(\bbr^{2d})}\|f\|_{L^1(\bbr^{2d})}+C\|f\|_{L^1(\bbr^{2d})}\|f\|_{L^1(\bbr^{2d})}
 \end{aligned}
\end{equation}
Applying $\partial_{x_i}\partial_{v_j}$ to \eqref{eq-kine-wtnois}-1, we obtain
\begin{equation} \label{eq-kine2dxv}
 \begin{aligned}
   (\partial_{x_i}\partial_{v_j}f)_t &+ \bv \cdot \nabla_{\bx} (\partial_{x_i}\partial_{v_j}f)+ \nabla_{\bv} \cdot (L[f] \partial_{x_i}\partial_{v_j}f)
   \\=&-\partial_{v_j}L[f] \cdot \nabla_{\bv} \partial_{x_i} f-\partial_{x_i} \partial_{v_j} L[f]\cdot \nabla_{\bv} f
   \\&-\partial_{x_i}  L[f]\cdot \nabla_{\bv} \partial_{v_j}f -\partial_{v_j} f \partial_{x_i} \nabla_{\bv} \cdot L[f]-  \partial_{x_i} \partial_{x_j}f.
 \end{aligned}
\end{equation}
Using the method as above, we get
\begin{equation} \label{eq-kine2dxvit}
 \begin{aligned}
  \frac{d}{dt} \|\partial_{x_i}\partial_{v_j}f\|_{L^1(\bbr^{2d})}\le &\|f\|_{L^1(\bbr^{2d})}
  \|\partial_{v_j}\partial_{x_i}f\|_{L^1(\bbr^{2d})}\\
  &+\|f\|_{L^1(\bbr^{2d})}  \|\partial_{v_j}f\|_{L^1(\bbr^{2d})}
  +c(t)\|f\|_{L^1(\bbr^{2d})}\|\nabla_{\bv}\partial_{v_j}f\|_{L^1(\bbr^{2d})}\\
  &+C\|\partial_{v_j}f\|_{L^1(\bbr^{2d})}\|f\|_{L^1(\bbr^{2d})}+
  \|\partial_{x_i}\partial_{x_j}f\|_{L^1(\bbr^{2d})}.
 \end{aligned}
\end{equation}
Applying $\partial_{v_i}\partial_{v_j}$ to \eqref{eq-kine-wtnois}-1, we obtain
\begin{equation} \label{eq-kine2dv}
 \begin{aligned}
   (\partial_{v_i}\partial_{v_j}f)_t &+ \bv \cdot \nabla_{\bx} (\partial_{v_i}\partial_{v_j}f)+ \nabla_{\bv} \cdot (L[f] \partial_{v_i}\partial_{v_j}f)
   \\=&-\partial_{v_j}L[f] \cdot \nabla_{\bv} \partial_{v_i} f
   \\&-\partial_{v_i}  L[f]\cdot \nabla_{\bv} \partial_{v_j}f - \partial_{x_i} \partial_{v_j}f.
 \end{aligned}
\end{equation}
Similarly, we have
\begin{equation} \label{eq-kine2dvit}
 \begin{aligned}
  \frac{d}{dt} \|\partial_{v_i}\partial_{v_j}f\|_{L^1(\bbr^{2d})}\le &\|f\|_{L^1(\bbr^{2d})}
  \|\partial_{v_j}\partial_{v_i}f\|_{L^1(\bbr^{2d})}\\
  &+\|f\|_{L^1(\bbr^{2d})}  \|\partial_{v_i}\partial_{v_j}f\|_{L^1(\bbr^{2d})}
  + \|\partial_{x_i}\partial_{v_j}f\|_{L^1(\bbr^{2d})}.
 \end{aligned}
\end{equation}
Adding \eqref{eq-kine2dxit}, \eqref{eq-kine2dxvit}, \eqref{eq-kine2dvit} together, summing over all $1\le i,j \le d$ and then combining with \eqref{eq-wtns-dvint}, we arrive at
\begin{equation} \label{eq-wtns-dv2int}
 \frac{d}{dt}\| f(t)\|_{W^{2,1}(\bbr^{2d})} \le c(t) \|f(t)\|_{W^{2,1}(\bbr^{2d})}.
\end{equation}
It follows from the Gronwall's inequality that
\begin{equation} \label{eq-w2nm}
 \|f(t)\|_{W^{2,1}(\bbr^{2d})} \le \|f_0\|_{W^{2,1}(\bbr^{2d})} \exp\left(\int_0^t c(\tau)d \tau\right), \quad \forall t \ge 0.
\end{equation}
(2) For two smooth solutions $f(t,\bx,\bv)$ and $g(t,\bx,\bv)$ with initial data $f_0$ and $g_0$ satisfying the above initial conditions, respectively. We define
\[
 \bar{h}:=f-g, \quad \overline{F}:=\nabla_{\bx}f-\nabla_{\bx}g \quad \text{and} \quad \overline{G}:=\nabla_{\bv}f-\nabla_{\bv}g.
\]
It follows from \eqref{eq-wtns-dx} that
\begin{equation} \label{eq-kine-dfr-dx}
 \begin{aligned}
   \overline{F}_t &+ \bv \cdot \nabla_{\bx} \overline{F}+ \nabla_{\bv} \cdot (L[f]\otimes \overline{F})\\
   =&-\nabla_{\bv} \cdot (L[\bar{h}]\otimes \nabla_{\bx}g)-\nabla_{\bx}L[\bar{h}]\cdot \nabla_{\bv}f\\
   &-\nabla_{\bx}L[g]\cdot \overline{G}- \bar{h}\nabla_{\bx}\nabla_{\bv} \cdot L[f]-g \nabla_{\bx}\nabla_{\bv} \cdot L[\bar{h}].
 \end{aligned}
\end{equation}
Multiplying \eqref{eq-kine-dfr-dx} by $sgn(\overline{F})$ with each component being the signal function of the corresponding one of $\overline{F}$, and then integrating the resulting equation over $\bbr^{2d}$, we obtain
\begin{equation} \label{eq-kine-dfr-dxit}
 \begin{aligned}
  \frac{d}{dt} \|\overline{F}\|_{L^1(\bbr^{2d})}\le & C\|\bar{h}\|_{L^1(\bbr^{2d})}
  \|\nabla_{\bx}g\|_{L^1(\bbr^{2d})}+c(t)\|\nabla_{\bx}\nabla_{\bv}g\|_{L^1(\bbr^{2d})}
  \|\bar{h}\|_{L^1(\bbr^{2d})}\\
  &+c(t)\|\nabla_{\bv}f\|_{L^1(\bbr^{2d})}  \|\bar{h}\|_{L^1(\bbr^{2d})}
  +c(t)\|g\|_{L^1(\bbr^{2d})}\|\overline{G}\|_{L^1(\bbr^{2d})}\\
  &+C\|f\|_{L^1(\bbr^{2d})}\|\bar{h}\|_{L^1(\bbr^{2d})}+C\|g\|_{L^1(\bbr^{2d})}\|\bar{h}\|_{L^1(\bbr^{2d})}.
 \end{aligned}
\end{equation}
From \eqref{eq-wtns-dv}, we deduce that
\begin{equation} \label{eq-kine-dfr-dv}
 \begin{aligned}
   &\overline{G}_t + \bv \cdot \nabla_{\bx} \overline{G}+ \nabla_{\bv} \cdot (L[f]\otimes \overline{G})\\
   =&-\nabla_{\bv} \cdot (L[\bar{h}]\otimes \nabla_{\bv}g)
   -\nabla_{\bv}L[\bar{h}]\cdot \nabla_{\bv} f- \nabla_{\bv}L[g] \cdot \overline{G}-\overline{F}.
 \end{aligned}
\end{equation}
Similarly, we have
\begin{equation} \label{eq-kine-dfr-dvit}
 \begin{aligned}
  \frac{d}{dt} \|\overline{G}\|_{L^1(\bbr^{2d})}\le & C\|\nabla_{\bv}g\|_{L^1(\bbr^{2d})}\|\bar{h}\|_{L^1(\bbr^{2d})}
  +c(t)\|\nabla_{\bv}^2 g\|_{L^1(\bbr^{2d})}
  \|\bar{h}\|_{L^1(\bbr^{2d})}\\
  &+\|\nabla_{\bv}f\|_{L^1(\bbr^{2d})}  \|\bar{h}\|_{L^1(\bbr^{2d})}
  +\|g\|_{L^1(\bbr^{2d})}\|\overline{G}\|_{L^1(\bbr^{2d})}+
  \|\overline{F}\|_{L^1(\bbr^{2d})}.
 \end{aligned}
\end{equation}
Adding \eqref{eq-kine-dfr-dxit} to \eqref{eq-kine-dfr-dvit} and combining with \eqref{eq-kine-dfrnm}, \eqref{eq-w2nm}, we arrive at
\begin{equation} \label{eq-kine-dfr1nm}
 \frac{d}{dt}\|f-g\|_{W^{1,1}(\bbr^{2d})} \le c(t)\| f-g\|_{W^{1,1}(\bbr^{2d})}.
\end{equation}
Solving the above Gronwall's inequality, we have
\begin{equation} \label{eq-wstb}
\sup_{0\le t \le T}\|f(t)-g(t)\|_{W^{1,1}(\bbr^{2d})}\le \|f_0-g_0\|_{W^{1,1}(\bbr^{2d})}\exp\left(\int_0^T c(t)dt\right), \quad \forall T\ge 0.
\end{equation}
This completes the proof.
\end{proof}

Higher order estimates can also be obtained with the same method. Next we present the proof of Theorem \ref{tm-wek} and Theorem \ref{tm-str}.
\subsection{Proof of Theorem \ref{tm-wek} and Theorem \ref{tm-str}}
We first mollify the initial data by convolution,i.e.,$$f_0^{\e}(\bx, \bv)=f_0 \ast j_{\e}(\bx, \bv),$$ where $j_{\e}$ is the standard mollifier. Using the contraction principle, we can obtain the local smooth solution by the standard procedure. Combining with the a priori estimate in Lemma \ref{lm-wek}(1), one can extend the local smooth solution to be global-in-time.

Then using the stability estimate in Lemma \ref{lm-wek}(2), we infer that
\begin{equation} \label{eq-wkcov}
\sup_{0\le t \le T}\|f^{\e_i}(t)-f^{\e_j}(t)\|_{L^{1}(\bbr^{2d})}\le \|f_0^{\e_i}-f_0^{\e_j}\|_{L^{1}(\bbr^{2d})}\exp\left(\int_0^T c(t)dt\right), \quad \forall T\ge 0,
\end{equation}
where $f^{\e_i}(t,\bx, \bv)$ and $f^{\e_j}(t,\bx, \bv)$ are smooth solution with initial data $f_0^{\e_i}$ and $f_0^{\e_j}$, respectively.
From \eqref{eq-wkcov}, we know there exists $f(t,\bx, \bv) \in C([0,T],L^1(\bbr^{2d}))$ such that
\[
 f^{\e_i}(t,\bx, \bv) \to f(t,\bx, \bv) \quad \text{in $C([0,T],L^1(\bbr^{2d}))$, as $\e_i \to 0$}.
\]
Due to the arbitrariness of $T$, we know $f(t,\bx, \bv) \in C([0,+\infty),L^1(\bbr^{2d}))$. It is easy to see that $f(t,\bx, \bv)$ is a weak solution to \eqref{eq-kine-wtnois}.
Take smooth initial data $f_0^{\e_i}$ and $g_0^{\e_i}$. We also have
\begin{equation} \label{eq-wkstacov}
\sup_{0\le t \le T}\|f^{\e_i}(t)-g^{\e_i}(t)\|_{L^{1}(\bbr^{2d})}\le \|f_0^{\e_i}-g_0^{\e_i}\|_{L^{1}(\bbr^{2d})}\exp\left(\int_0^T c(t)dt\right), \quad \forall T\ge 0,
\end{equation}
Letting $\e_i \to 0$, we obtain the stability estimate for weak solutions to \eqref{eq-kine-wtnois}, which  amounts to uniqueness of the weak solution.

Theorem \ref{tm-str} can be proved in the same way. We omit its proof for brevity. Thus we complete the proof.
%%%%%%%%%%%%%%%%%%%%%%%%%%%%%%%%%%%%%%%%%%%%%%%%%%%%%%%%%%%%%%%%%%%%%%%%%%%%%%%%%%%%%%%%%%%%%%%%%%%
%
%         Sect. 3  Kinetic Cucker-Smale Model with Noise
%
%%%%%%%%%%%%%%%%%%%%%%%%%%%%%%%%%%%%%%%%%%%%%%%%%%%%%%%%%%%%%%%%%%%%%%%%%%%%%%%%%%%%%%%%%%%%%%%%%%%
\section{Kinetic Cucker-Smale Model with Noise}
\setcounter{equation}{0}
In this section, we study the kinetic Cucker-Smale model with noise, i.e.,
\begin{equation} \label{eq-kine-nois}
   \begin{dcases}
      f_t + \bv \cdot \nabla_{\bx} f+ \nabla_{\bv} \cdot (L[f]f)=\sigma \Delta_{\bv}f, \quad 0<\sigma \le 1\\
      f|_{t=0}=f_0(\bx,\bv),
   \end{dcases}
\end{equation}
Unlike \eqref{eq-kine-wtnois}, the velocity support of the solution to this equation may be unbounded, even if the the initial data have compact velocity support. So the method in section 2 is not valid. In order to circumvent this difficulty, we introduce two weighted Hilbert spaces to establish the well-posedness of weak and strong solutions to \eqref{eq-kine-nois}.
Define
\[
 \|f\|_{L^2(\omega)}=\left( \int_{\bbr^{2d}} f^2(\bx, \bv)\omega(\bx, \bv) d \bx d\bv \right)^{\frac12},
\]
\[
\|f\|_{L^2(\nu)}=\left(\int_{\bbr^{2d}} f^2(\bx, \bv)\nu(\bv) d \bx d\bv \right)^{\frac12},
\]
\[
 X=\{f:\  f \in L^2(\omega), \ \nabla_{\bx}f \in L^2(\nu), \ \nabla_{\bv}f \in L^2(\bbr^{2d}) \},
\]
\[
 \|f\|_X^2=\|f\|_{L^2(\omega)}^2+\|\nabla_{\bx}f\|_{L^2(\nu)}^2+\|\nabla_{\bv}f\|_{L^2(\bbr^{2d})}^2,
\]
where $\omega(\bx, \bv)=(1+\bv^2)(1+\bx^2+\bv^2)^{\alpha}, \alpha >3$ and  $\nu(\bv)=1+\bv^2$.
Next we present the definition and results in this section.
\begin{definition} \label{def-ws-nois}
Let $f(t,\bx, \bv)\in C([0,+\infty),L^2(\bbr^{2d}))$. $f(t,\bx, \bv)$ is a weak solution to \eqref{eq-kine-nois} if
\[
   f_t + \bv \cdot \nabla_{\bx} f+ \nabla_{\bv} \cdot (L[f]f)=\sigma \Delta_{\bv}f, \quad \text{in} \ \mathcal{D}'([0,+\infty)\times \bbr^{2d}).
\]
We say $f(t,\bx, \bv)$ is a strong solution if $f(t,\bx, \bv)$ is a weak solution and $f(t,\bx, \bv)\in C([0,+\infty),H^{1}(\bbr^{2d})) \cap L^2((0,T) \times \bbr^{d},H^2(\bbr_{\bv}^{d})), \forall T \ge 0$.
\end{definition}
\begin{remark}
Since the strong solution means that a solution satisfies the equation almost everywhere, thus $f(t,\bx, \bv)$ is still a strong solution to \eqref{eq-kine-wtnois} if $f(t,\bx, \bv)$ is a weak solution and $f(t,\bx, \bv)\in C([0,+\infty),H^{1}(\bbr^{2d}))$.
\end{remark}
\begin{theorem} \label{tm-wek-nois}
Assume $(1+\bv^2)^{\frac12}f_0(\bx, \bv)\in  L^2(\omega)$. Then \eqref{eq-kine-nois} admits a unique weak solution $f(t, \bx,\bv) \in C([0,+\infty),L^2(\omega))$.
Moreover, it hold that
\begin{eqnarray*}
    &&(1)~\|(1+\bv^2)^{\frac12}f(t)\|_{L^2(\omega)}^2+\sigma \int_0^t \|(1+\bv^2)^{\frac12}\nabla_{\bv}f(\tau)\|_{L^2(\omega)}^2 d \tau \\
     &&\qquad \le \|(1+\bv^2)^{\frac12}f_0\|_{L^2(\omega)}^2 \exp\left(\int_0^t c(\tau)d \tau\right), \quad a.e.\ t \ge 0;\\
    &&(2)~\sup_{0\le t \le T}\|f(t)-g(t)\|_{L^2(\omega)}\le \|f_0-g_0\|_{L^2(\omega)}\exp\left(\int_0^T c(t)dt\right), \quad \forall T\ge 0,
  \end{eqnarray*}
where $f(t, \bx,\bv)$ and $g(t, \bx,\bv)$ are weak solutions with initial data $f_0$ and $g_0$ satisfying the above conditions, respectively.
\end{theorem}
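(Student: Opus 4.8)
The plan is to follow the architecture of Section~2: first derive a priori bounds for smooth (approximate) solutions, namely the weighted energy estimate (1) and the stability estimate (2), and then recover the weak solution by regularizing the initial data, solving the regularized problem globally, and passing to the limit with the help of (2); uniqueness is then immediate from (2). Since the velocity support is no longer finite, the role played by $R_0$ in Section~2 must be taken over by moments of $f$ that are controlled through the weight, so the substantive work is the estimate (1), and the delicate point is (2).

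Before estimating (1) I would record the conservation properties that make the field $L[f]$ tame. Integrating the equation gives mass conservation and, since the equation is parabolic in $\bv$ with bounded zeroth-order coefficient, the maximum principle keeps the approximate solutions nonnegative, so $\|f(t)\|_{L^1(\bbr^{2d})}=\|f_0\|_{L^1(\bbr^{2d})}$. Multiplying by $\bv^2$ and integrating, the $\bx$-transport term vanishes, the alignment term contributes $2\int f\,\bv\cdot L[f]=-\iint\varphi\, f(\bx,\bv)f(\by,\bv^*)|\bv-\bv^*|^2\le 0$, and the only source is the noise, $\sigma\int\bv^2\Delta_\bv f=2d\sigma\|f_0\|_{L^1(\bbr^{2d})}$; hence $\int f\bv^2\,d\bx d\bv\le\int f_0\bv^2\,d\bx d\bv+2d\|f_0\|_{L^1(\bbr^{2d})}t$. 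Writing $a[f]=\int\varphi f\,d\by d\bv^*$ and $\mathbf b[f]=\int\varphi f\bv^*\,d\by d\bv^*$, these two facts together with $\varphi\le 1$ give $a[f]\le\|f_0\|_{L^1(\bbr^{2d})}$ and, by Cauchy–Schwarz, $|\mathbf b[f]|\le\|f_0\|_{L^1(\bbr^{2d})}^{1/2}(\int f\bv^2)^{1/2}\le c(t)$, so that $|L[f](t,\bx,\bv)|\le c(t)(1+|\bv|)$ with $c$ depending only on the data and on $t$. This is exactly what replaces the compact-support bound used in Section~2.

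For (1) I set $W=(1+\bv^2)\omega=(1+\bv^2)^2(1+\bx^2+\bv^2)^\alpha$, so that $\|(1+\bv^2)^{1/2}f\|_{L^2(\omega)}^2=\int f^2W$. Testing the equation against $2fW$, the $\bx$-transport term becomes $-\int f^2\,\bv\cdot\nabla_\bx W$ with $|\bv\cdot\nabla_\bx W|\le\alpha W$; the alignment term, after integration by parts, becomes $-d\int a[f]f^2W-\int f^2\,\nabla_\bv W\cdot L[f]$; and the noise term becomes $-2\sigma\int W|\nabla_\bv f|^2+\sigma\int f^2\Delta_\bv W$. Direct differentiation of the weight gives $|\nabla_\bv W|\le C(1+\bv^2)^{-1/2}W$ and $|\Delta_\bv W|\le CW$, so using $a[f]\le\|f_0\|_{L^1(\bbr^{2d})}$ and $|L[f]|\le c(t)(1+|\bv|)$ every term on the right is bounded by $c(t)\int f^2W$, while the good term $-2\sigma\int W|\nabla_\bv f|^2=-2\sigma\|(1+\bv^2)^{1/2}\nabla_\bv f\|_{L^2(\omega)}^2$ is kept on the left; Gronwall's inequality then yields (1). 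The exponent $\alpha>3$ enters only to guarantee that $\int\omega^{-1}$ and $\int|\bv|^2\omega^{-1}$ converge, which is what lets one bound the $L^1$ and first-moment quantities by $\|\cdot\|_{L^2(\omega)}$ in the next step.

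The hard part is (2). Writing $\bar h=f-g$ and using $L[f]-L[g]=L[\bar h]$, the difference solves the same equation as $f$ with the extra inhomogeneity $\nabla_\bv\cdot(L[\bar h]g)=-d\,a[\bar h]g+L[\bar h]\cdot\nabla_\bv g$. Testing against $2\bar h\omega$, the transport, the $L[f]$-alignment, and the noise terms are controlled exactly as in (1) and produce $c(t)\|\bar h\|_{L^2(\omega)}^2$; the term $-d\int a[\bar h]g\,\bar h\omega$ is harmless since $|a[\bar h]|\le C\|\bar h\|_{L^2(\omega)}$ and $\|g\|_{L^2(\omega)}$ is bounded via (1). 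The genuinely delicate term is $\int \bar h\,\omega\,L[\bar h]\cdot\nabla_\bv g$: here $|L[\bar h]|\le C\|\bar h\|_{L^2(\omega)}(1+|\bv|)$, and Cauchy–Schwarz bounds it by $C\|\bar h\|_{L^2(\omega)}^2\,\|(1+\bv^2)^{1/2}\nabla_\bv g\|_{L^2(\omega)}$. The coefficient $\|(1+\bv^2)^{1/2}\nabla_\bv g(t)\|_{L^2(\omega)}$ is \emph{not} bounded pointwise in $t$, but the dissipation bound in (1) shows it lies in $L^2(0,T)\subset L^1(0,T)$, which is precisely enough to keep the total Gronwall coefficient integrable on $[0,T]$; Gronwall then yields (2). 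Finally, mollifying $f_0$, constructing global smooth solutions for the regularized (and, if convenient, uniformly parabolic) problem by the contraction principle as in Section~2, and using (2) to pass to the limit produces the weak solution in $C([0,+\infty),L^2(\omega))$, with (2) giving uniqueness.
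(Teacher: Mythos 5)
Your proposal is correct and follows the same overall architecture as the paper: weighted $L^1$/moment bounds to tame $L[f]$, the weighted energy estimate obtained by testing against $2f(1+\bv^2)\omega$, a Gronwall stability estimate for $\bar h=f-g$ in $L^2(\omega)$, and then mollification of the data plus the stability estimate to produce and uniquify the weak solution. The one place where you genuinely diverge is the treatment of the cross term coming from $\nabla_{\bv}\cdot(L[\bar h]g)$ in the stability estimate. You expand the divergence and bound $\int \bar h\,\omega\,L[\bar h]\cdot\nabla_{\bv}g$ directly by $C\|\bar h\|_{L^2(\omega)}^2\|(1+\bv^2)^{1/2}\nabla_{\bv}g\|_{L^2(\omega)}$, observing that this coefficient lies in $L^2(0,T)\subset L^1(0,T)$ thanks to the dissipation integral in part (1); this is perfectly valid for fixed $\sigma>0$ and is arguably more elementary. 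The paper instead integrates by parts, writing the term as $2\int gL[\bar h]\cdot(\omega\nabla_{\bv}\bar h+\bar h\nabla_{\bv}\omega)$, and absorbs the $\nabla_{\bv}\bar h$ contribution into the good term $-2\sigma\|\nabla_{\bv}\bar h\|_{L^2(\omega)}^2$ via Young's inequality, so that the resulting Gronwall coefficient involves only $\|(1+\bv^2)^{1/2}g\|_{L^2(\omega)}^2$, which is bounded pointwise in time uniformly in $\sigma$. The practical difference: your route yields a stability constant of size roughly $\exp\bigl(C(T)\sigma^{-1/2}\bigr)$, since the $L^1(0,T)$ norm of $\|(1+\bv^2)^{1/2}\nabla_{\bv}g\|_{L^2(\omega)}$ carries a factor $\sigma^{-1/2}$, whereas the paper's constant is uniform in $\sigma$. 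For Theorem \ref{tm-wek-nois} itself this is immaterial, but the $\sigma$-uniformity is what the paper later exploits in the vanishing noise limit of Section 4, so if you intend to reuse the stability bound there you should adopt the integration-by-parts/absorption version. Two further minor remarks: your second-moment route to bounding $\mathbf b[f]$ (in place of the paper's weighted $L^1$ estimate of $(1+\bv^2)^{1/2}f$) requires the nonnegativity you invoke via the maximum principle, which the paper's version avoids; and all the embeddings $L^2(\omega)\hookrightarrow L^1$ and $\|(1+\bv^2)^{1/2}\bar h\|_{L^1}\le C\|\bar h\|_{L^2(\omega)}$ need $\alpha$ large enough that $(1+\bx^2+\bv^2)^{-\alpha}$ is integrable on $\bbr^{2d}$, a point on which you and the paper make the same implicit assumption.
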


\begin{theorem} \label{tm-str-nois}
Assume $(1+\bv^2)^{\frac12}f_0(\bx, \bv)\in  X$. Then \eqref{eq-kine-nois} admits a unique strong solution $f(t,\bx, \bv)\in C([0,+\infty), X)$.
Moreover, it hold that
\begin{eqnarray*}
    &&(1)~\|(1+\bv^2)^{\frac12}f(t)\|_{X}^2+\sigma \int_0^t \|(1+\bv^2)^{\frac12}\nabla_{\bv}f(\tau)\|_{X}^2 d \tau \\
     &&\qquad \le \|(1+\bv^2)^{\frac12}f_0\|_{X}^2 \exp\left(\int_0^t c(\tau)d \tau\right), \quad a.e.\ t \ge 0;\\
    &&(2)~\sup_{0\le t \le T}\|f(t)-g(t)\|_{X}\le \|f_0-g_0\|_{X} \exp\left(\int_0^T c(t)dt\right), \quad \forall T\ge 0,
  \end{eqnarray*}
where $f(t, \bx,\bv)$ and $g(t, \bx,\bv)$ are strong solutions with initial data $f_0$ and $g_0$ satisfying the above conditions, respectively.
\end{theorem}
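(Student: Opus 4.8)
The plan is to mirror the architecture of Section~2: I will establish a closed, weighted a priori estimate for the $X$-norm of smooth solutions, complement it with a weighted stability estimate, and then construct the solution by mollifying the initial data and passing to the limit. Since Theorem~\ref{tm-wek-nois} already furnishes a weak solution in $C([0,+\infty),L^2(\omega))$ whose zeroth-order weighted energy is controlled, the genuine work is to propagate the first-order regularity encoded in $X$ and to exhibit the parabolic gain $\sigma\int_0^t\|(1+\bv^2)^{\frac12}\nabla_{\bv}f\|_X^2$.

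First I would work with a suitable regularization (mollified $f_0$ and, if needed, an auxiliary vanishing $\bx$-diffusion) so that every integration by parts below is legitimate; the estimates I derive will be uniform in the regularization and will survive the limit. For the zeroth-order piece I multiply \eqref{eq-kine-nois} by $(1+\bv^2)f\,\omega$ and integrate over $\bbr^{2d}$. After integrating by parts, the transport term sees the weight only through $\bv\cdot\nabla_{\bx}\omega$; since $|\bv\cdot\nabla_{\bx}\omega|\le\alpha\,\omega$ (the point of the $\bx$-growth of $\omega$), this is harmless. The nonlocal term is integrated by parts in $\bv$ and estimated via $|L[f]|\le C(1+|\bv|)$ and $|\nabla_{\bv}\cdot L[f]|\le C$ together with the moment bounds controlled by the weighted norm; the linear velocity growth of $L[f]$ is split by Young's inequality so that the surplus power of $(1+\bv^2)$ is absorbed into the diffusion. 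The diffusion term yields the good dissipation $-\sigma\|(1+\bv^2)^{\frac12}\nabla_{\bv}f\|_{L^2(\omega)}^2$ plus lower-order contributions from $\nabla_{\bv}\omega$ that Young's inequality reabsorbs.

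Next I differentiate \eqref{eq-kine-nois} in $\bx$ and in $\bv$, obtaining the analogues of \eqref{eq-wtns-dx} and \eqref{eq-wtns-dv} now carrying the extra $\sigma\Delta_{\bv}$, and run the weighted energy estimate with weight $\nu$ on $\nabla_{\bx}f$ and weight $1$ on $\nabla_{\bv}f$ (each further multiplied by $(1+\bv^2)$ to match the stated $(1+\bv^2)^{\frac12}$-prefactor). This is the step I expect to be the main obstacle: the source terms $\nabla_{\bx}L[f]\cdot\nabla_{\bv}f$ in the $\bx$-equation and $-\nabla_{\bx}f$ in the $\bv$-equation couple the two levels, and since $|\nabla_{\bx}L[f]|\le C(1+|\bv|)$ the natural bound on the coupling carries one power of $(1+\bv^2)$ too many for the bare $X$-norm to absorb. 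The weight hierarchy is engineered precisely for this: the heavier weight $\nu=1+\bv^2$ on $\nabla_{\bx}f$, the lighter weight on $\nabla_{\bv}f$, and the parabolic dissipation $\sigma\|(1+\bv^2)^{\frac12}\nabla_{\bv}f\|_X^2$ on the left together let Young's inequality distribute the excess velocity growth, the leftover being swallowed by the diffusion (the role of $\alpha>3$ being to keep the weight-derivative and moment integrals convergent). Summing the zeroth- and first-order inequalities produces $\frac{d}{dt}\|(1+\bv^2)^{\frac12}f\|_X^2+\sigma\|(1+\bv^2)^{\frac12}\nabla_{\bv}f\|_X^2\le c(t)\|(1+\bv^2)^{\frac12}f\|_X^2$, and Gronwall gives estimate~(1).

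Finally, for the stability estimate~(2) I set $\bar{h}=f-g$ with $\overline{F}=\nabla_{\bx}\bar{h}$, $\overline{G}=\nabla_{\bv}\bar{h}$, and derive their evolution equations as in \eqref{eq-kine-dfr}, \eqref{eq-kine-dfr-dx}, \eqref{eq-kine-dfr-dv} with the diffusion term appended, exploiting the splitting $L[f]f-L[g]g=L[f]\bar{h}+L[\bar{h}]g$. The same weighted-$L^2$ estimates, now linear in $\bar{h}$ and using the uniform bounds on $f,g$ from part~(1) to control the $g$-factors, give $\frac{d}{dt}\|\bar{h}\|_X^2\le c(t)\|\bar{h}\|_X^2$, hence the stability bound and, in particular, uniqueness. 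Applying this to two mollified data shows the regularized solutions are Cauchy in $C([0,T],X)$ for every $T$; their limit lies in $C([0,+\infty),X)$, inherits the time-integrated dissipation (whence the extra parabolic regularity $L^2((0,T)\times\bbr^{d},H^2(\bbr_{\bv}^{d}))$ of Definition~\ref{def-ws-nois}), and is readily checked to solve \eqref{eq-kine-nois} in the weak, and therefore strong, sense.
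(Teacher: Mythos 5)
Your proposal follows essentially the same route as the paper: the paper's Lemma \ref{lm-str-nois} performs exactly the weighted energy estimates you describe (multiplying the $\nabla_{\bx}$-equation by $2(1+\bv^2)^2\nabla_{\bx}f$ and the $\nabla_{\bv}$-equation by $2(1+\bv^2)\nabla_{\bv}f$, absorbing the excess velocity growth of the coupling terms into the $\sigma$-dissipation and the heavier weight on $f$), and the existence/uniqueness argument is the same mollification--contraction--stability--Cauchy-limit scheme, with part (1) recovered by lower semicontinuity under weak-$\star$ convergence. The only cosmetic difference is your optional auxiliary $\bx$-diffusion, which the paper does not need.
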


\begin{remark}\label{rm-cla}
We can also establish the well-posedness of classical solutions to \eqref{eq-kine-nois} by using the same method and the Sobolev embedding if we improve the regularity of initial data.
\end{remark}

In the following subsection, we derive some a priori estimates that are needed in our proof.

\subsection{A priori estimates}
\begin{lemma} \label{lm-prlm-nois}
Assume $(1+\bv^2)^{\frac12}f_0(\bx, \bv)\in  L^2(\omega)$. If $f(t, \bx,\bv)$ is a smooth solution to \eqref{eq-kine-nois}, then
  \begin{eqnarray*}
    &&(1)~\|f(t)\|_{L^{1}(\bbr^{2d})}=\|f_0\|_{L^{1}(\bbr^{2d})}, \quad \forall t \ge 0;\\
    &&(2)~\|(1+\bv^2)^{\frac12} f(t)\|_{L^{1}(\bbr^{2d})} \le \|(1+\bv^2)^{\frac12} f_0\|_{L^{1}(\bbr^{2d})}e^{Ct}, \quad \forall t \ge 0; \\
    &&(3)~\|(1+\bv^2)^{\frac12}f(t)\|_{L^2(\omega)}^2+\sigma \int_0^t \|(1+\bv^2)^{\frac12}\nabla_{\bv}f(\tau)\|_{L^2(\omega)}^2 d \tau \\
     &&\qquad \le \|(1+\bv^2)^{\frac12}f_0\|_{L^2(\omega)}^2 \exp\left(\int_0^t c(\tau)d \tau\right), \quad \forall t \ge 0.
  \end{eqnarray*}
\end{lemma}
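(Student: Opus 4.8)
The plan is to prove the three estimates in order, using (1) and (2) to control the coefficients $a(t,\bx)=\int\varphi(|\bx-\by|)f\,d\by d\bv^*$ and $\mathbf{b}(t,\bx)=\int\varphi(|\bx-\by|)f\bv^*\,d\by d\bv^*$ appearing in $L[f]=\mathbf{b}-a\bv$, and only then to attack the weighted $L^2$ estimate (3). Throughout I would exploit the identities $\nabla_{\bv}\cdot L[f]=-d\,a$ and the fact that, for smooth fast-decaying $f$, the transport, interaction and diffusion terms are perfect divergences. Note first that the hypothesis $(1+\bv^2)^{\frac12}f_0\in L^2(\omega)$ together with $\alpha>3$ forces $\int_{\bbr^{2d}}\omega^{-1}<\infty$, whence $f_0$ and $(1+\bv^2)^{\frac12}f_0$ lie in $L^1(\bbr^{2d})$; this is what makes the $L^1$-type quantities below finite, and it is the main place where $\alpha>3$ enters.

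For (1) I would integrate \eqref{eq-kine-nois}-1 over $\bbr^{2d}$: every term is a divergence and integrates to zero, so $\frac{d}{dt}\int f=0$. Nonnegativity of $f$, which for this Fokker--Planck-type equation follows from the parabolic maximum principle, upgrades conservation of mass to conservation of the $L^1$ norm. For (2), writing $\langle\bv\rangle=(1+\bv^2)^{\frac12}$, I would multiply by $\langle\bv\rangle$ and integrate, using $f\ge0$ from (1). The transport term drops out; the drift term produces $-a\,\bv^2/\langle\bv\rangle\le0$ (using $a\ge0$) plus a term bounded by $\|\mathbf{b}\|_{\infty}\|f\|_{L^1}\le\|\langle\bv\rangle f\|_{L^1}\|f_0\|_{L^1}$ (since $|\bv|/\langle\bv\rangle\le1$); the diffusion contributes $\sigma\int\Delta_{\bv}\langle\bv\rangle\,f$ with $0\le\Delta_{\bv}\langle\bv\rangle\le C\langle\bv\rangle$. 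All are $\le C\|\langle\bv\rangle f\|_{L^1}$, so Gronwall yields the exponential bound.

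The heart of the lemma is (3). Setting $W:=(1+\bv^2)\omega=(1+\bv^2)^2(1+\bx^2+\bv^2)^{\alpha}$, so that $\|(1+\bv^2)^{\frac12}f\|_{L^2(\omega)}^2=\int f^2W$, I would multiply \eqref{eq-kine-nois}-1 by $2fW$ and integrate. After integrating by parts in $\bx$ the transport term becomes $\int(\bv\cdot\nabla_{\bx}W)f^2$, and $|\bv\cdot\bx|\le\frac12(1+\bx^2+\bv^2)$ gives $|\bv\cdot\nabla_{\bx}W|\le\alpha W$, hence $\le C\int f^2W$. Integrating by parts twice in $\bv$, the diffusion term equals $-2\sigma\int W|\nabla_{\bv}f|^2+\sigma\int\Delta_{\bv}W\,f^2$; a direct computation gives $\Delta_{\bv}W\le C(\alpha,d)W$, so the second piece is $\le C\int f^2W$ while the first (the dissipation, equal to $2\sigma\|(1+\bv^2)^{\frac12}\nabla_{\bv}f\|_{L^2(\omega)}^2$) is kept on the left. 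The interaction term, integrated by parts in $\bv$, yields $\int(\nabla_{\bv}W\cdot L[f])f^2+d\int a\,W f^2$; the $-a\,\bv\cdot\nabla_{\bv}W$ part has a favorable sign, the bound $a\le\|f\|_{L^1}$ from (1) controls $d\int aWf^2\le C\int f^2W$, and for $\int(\nabla_{\bv}W\cdot\mathbf{b})f^2$ one uses $|\nabla_{\bv}W|\le C(1+\bv^2)^{\frac32}(1+\bx^2+\bv^2)^{\alpha}=CW/\langle\bv\rangle\le CW$ together with $\|\mathbf{b}\|_{\infty}\le\|\langle\bv\rangle f\|_{L^1}\le c(t)$ from (2). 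Collecting terms gives $\frac{d}{dt}\int f^2W+2\sigma\int W|\nabla_{\bv}f|^2\le c(t)\int f^2W$, and integrating in time and applying Gronwall produces exactly (3).

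The step I expect to be the main obstacle is the interaction contribution $\int(\nabla_{\bv}W\cdot\mathbf{b})f^2$. Differentiating the heavy weight $W$ in $\bv$ costs an extra power of $|\bv|$, which is not naively dominated by $\int f^2W$; the two devices that rescue it are (i) estimating the $(1+\bv^2)^{\frac12}$-weighted quantity rather than $f$ itself, so the extra power is absorbed into $W/\langle\bv\rangle\le W$, and (ii) bounding $\mathbf{b}$ by the already-controlled time function $c(t)$ from (2) rather than by the $L^2(\omega)$ norm, which is essential to keep the Gronwall inequality \emph{linear}, hence global in time, instead of Riccati-type. Making the formal integrations by parts rigorous for merely smooth solutions requires the decay furnished by the finite weighted norms, e.g. via a truncation argument, but this is routine.
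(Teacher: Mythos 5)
Your proposal is correct and follows essentially the same route as the paper: conservation of mass by direct integration, a Gronwall bound for $\|(1+\bv^2)^{\frac12}f\|_{L^1}$ obtained by multiplying by $(1+\bv^2)^{\frac12}$, and then the weighted energy estimate obtained by multiplying by $2f(1+\bv^2)\omega$, integrating by parts, and controlling $L[f]$ through the quantities from (1) and (2) so that the Gronwall inequality stays linear. The only substantive difference is that you explicitly account for the commutator term $\int f^2\,\bv\cdot\nabla_{\bx}\bigl((1+\bv^2)\omega\bigr)$ coming from the transport operator (bounding it by $\alpha\int f^2(1+\bv^2)\omega$), a term the paper's displayed identity appears to drop; your treatment is the more careful one and does not change the conclusion.
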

\begin{proof}
(1) Since $(1+\bv^2)^{\frac12}f_0(\bx, \bv)\in  L^2(\omega)$, it is easy to see that
\begin{equation} \label{eq-lone}
 \|(1+\bv^2)^{\frac12} f_0\|_{L^{1}(\bbr^{2d})} \le C \|(1+\bv^2)^{\frac12}f_0\|_{L^2(\omega)}.
\end{equation}
Direct integrating \eqref{eq-kine-nois} over $[0,t]\times \bbr^{2d}$ yields
\[
 \|f(t)\|_{L^{1}(\bbr^{2d})}=\|f_0\|_{L^{1}(\bbr^{2d})}, \quad \forall t \ge 0.
\]
(2) Multiplying \eqref{eq-kine-nois} by $(1+\bv^2)^{\frac12}$, we deduce that
\begin{equation} \label{eq-lonef}
  \begin{gathered}
    ((1+\bv^2)^{\frac12}f)_t + \bv \cdot \nabla_{\bx} ((1+\bv^2)^{\frac12}f)+ \nabla_{\bv} \cdot (L[f](1+\bv^2)^{\frac12}f)\\
    =\sigma (1+\bv^2)^{\frac12}\Delta_{\bv}f+fL[f]\cdot \nabla_{\bv}(1+\bv^2)^{\frac12}.
  \end{gathered}
\end{equation}
Integrating \eqref{eq-lonef} over $\bbr^{2d}$ and performing integration by parts, we obtain
\begin{equation} \label{eq-lonefit}
 \begin{aligned}
  \frac{d}{dt} \|(1+\bv^2)^{\frac12}f\|_{L^1(\bbr^{2d})}\le & \sigma \|f\|_{L^1(\bbr^{2d})}+\|f \bv\|_{L^1(\bbr^{2d})}\|f\|_{L^1(\bbr^{2d})}+\|f\|_{L^1(\bbr^{2d})}
  \|f \bv\|_{L^1(\bbr^{2d})}\\
  \le &(2\|f_0\|_{L^1(\bbr^{2d})}+1)\|(1+\bv^2)^{\frac12}f\|_{L^1(\bbr^{2d})}.
 \end{aligned}
\end{equation}
It follows from the Gronwall's inequality that
\begin{equation} \label{eq-lonefest}
 \|(1+\bv^2)^{\frac12} f(t)\|_{L^{1}(\bbr^{2d})} \le \|(1+\bv^2)^{\frac12} f_0\|_{L^{1}(\bbr^{2d})}e^{Ct}, \quad \forall t \ge 0.
\end{equation}
(3) Multiplying \eqref{eq-kine-nois} by $2f(1+\bv^2)\omega$, we get
\begin{equation} \label{eq-l2fwt}
 \begin{aligned}
   &((1+\bv^2)\omega f^2)_t + \bv \cdot \nabla_{\bx} ((1+\bv^2)\omega f^2)+ \nabla_{\bv} \cdot (L[f](1+\bv^2)\omega f^2)\\
    =&f^2 L[f] \cdot \nabla_{\bv}((1+\bv^2)\omega) +2\sigma (1+\bv^2)\omega f \Delta_{\bv}f-(1+\bv^2)\omega f^2\nabla_{\bv} \cdot L[f].
 \end{aligned}
\end{equation}
Integrating \eqref{eq-l2fwt} over $\bbr^{2d}$ and using integration by parts, we deduce that
\begin{equation} \label{eq-l2fwtint}
 \begin{aligned}
  &\frac{d}{dt} \|(1+\bv^2)^{\frac12}f\|_{L^2(\omega)}^2+ \sigma \|(1+\bv^2)^{\frac12} \nabla_{\bv}f\|_{L^2(\omega)}^2\\
  \le &C\|(1+\bv^2)^{\frac12}f\|_{L^1(\bbr^{2d})}\|(1+\bv^2)^{\frac12}f\|_{L^2(\omega)}^2+ \sigma \|(1+\bv^2)^{\frac12}f\|_{L^2(\omega)}^2 \\&+C\|f\|_{L^1(\bbr^{2d})}\|(1+\bv^2)^{\frac12}f\|_{L^2(\omega)}^2.
 \end{aligned}
\end{equation}
Combining with \eqref{eq-lonefest} and solving the above Gronwall's inequality yield
\begin{equation} \label{eq-ltwof}
  \begin{gathered}
    \|(1+\bv^2)^{\frac12}f(t)\|_{L^2(\omega)}^2+\sigma \int_0^t \|(1+\bv^2)^{\frac12}\nabla_{\bv}f(\tau)\|_{L^2(\omega)}^2 d \tau \\
     \le \|(1+\bv^2)^{\frac12}f_0\|_{L^2(\omega)}^2 \exp\left(\int_0^t c(\tau)d \tau\right), \quad \forall t \ge 0.
  \end{gathered}
\end{equation}
This completes the proof.
\end{proof}

\begin{lemma} \label{lm-stb-nois}
If $f(t, \bx,\bv)$ and $g(t, \bx,\bv)$ are two smooth solutions with initial data $f_0$ and $g_0$ satisfying the condition in Lemma \ref{lm-prlm-nois}, respectively, then
\[
 \sup_{0\le t \le T}\|f(t)-g(t)\|_{L^2(\omega)}\le \|f_0-g_0\|_{L^2(\omega)}\exp\left(\int_0^T c(t)dt\right), \quad \forall T\ge 0.
\]
\end{lemma}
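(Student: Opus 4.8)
The plan is to run a weighted $L^2$ energy estimate on the difference $\bar h := f-g$, mirroring the $L^1$ argument of Lemma~\ref{lm-wek}(2) but now in the Hilbert space $L^2(\omega)$. Subtracting the two copies of \eqref{eq-kine-nois} and using that $f\mapsto L[f]$ is linear, so that $L[f]f-L[g]g = L[f]\bar h + L[\bar h]g$, I obtain
\[
 \bar h_t + \bv\cdot\nabla_{\bx}\bar h + \nabla_{\bv}\cdot(L[f]\bar h) = -\nabla_{\bv}\cdot(L[\bar h]g) + \sigma\Delta_{\bv}\bar h,
\]
which is the analogue of \eqref{eq-kine-dfr} with the extra dissipative term $\sigma\Delta_{\bv}\bar h$ on the right.

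Next I would multiply this identity by $2\bar h\,\omega$ and integrate over $\bbr^{2d}$, producing $\frac{d}{dt}\|\bar h\|_{L^2(\omega)}^2$ on the left and several groups of terms on the right, each of which I claim is $\le c(t)\|\bar h\|_{L^2(\omega)}^2$ with $c\in L^1(0,T)$. The transport term and the $L[f]$-drift term are handled by integration by parts, which transfers a derivative onto the weight; the relevant \emph{weight-compatibility} inequalities are $|\bv\cdot\nabla_{\bx}\omega|\le C\omega$, $|L[f]\cdot\nabla_{\bv}\omega|\le c(t)\omega$ and $\Delta_{\bv}\omega\le C\omega$, all of which follow from the explicit form $\omega=(1+\bv^2)(1+\bx^2+\bv^2)^{\alpha}$ together with the moment bound of Lemma~\ref{lm-prlm-nois} (one also uses $|\nabla_{\bv}\cdot L[f]|=d\,|a[f]|\le C\|f_0\|_{L^1}$). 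The noise term gives, after integration by parts, $-2\sigma\int\omega|\nabla_{\bv}\bar h|^2 + \sigma\int\bar h^2\Delta_{\bv}\omega$; the first is nonpositive and may simply be discarded, while the second is $\le C\|\bar h\|_{L^2(\omega)}^2$ since $\sigma\le 1$ and $\Delta_{\bv}\omega\le C\omega$.

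The genuinely new difficulty, and the heart of the proof, is the inhomogeneous term $-\nabla_{\bv}\cdot(L[\bar h]g)$, because $L[\bar h]$ and $\nabla_{\bv}\cdot L[\bar h]$ are \emph{nonlocal} in $\bar h$ and must be controlled by $\|\bar h\|_{L^2(\omega)}$ alone, with no derivative of $\bar h$ available. Writing $L[\bar h]=\mathbf{b}[\bar h](t,\bx)-a[\bar h](t,\bx)\bv$ with $a[\bar h]=\int\varphi\bar h$, $\mathbf{b}[\bar h]=\int\varphi\bar h\,\bv^*$ and $\nabla_{\bv}\cdot L[\bar h]=-d\,a[\bar h]$, the task reduces to bounding $|a[\bar h]|\le\|\bar h\|_{L^1}$ and $|\mathbf{b}[\bar h]|\le\|(1+\bv^2)^{1/2}\bar h\|_{L^1}$. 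This is exactly where the weight enters: by Cauchy--Schwarz,
\[
 \|\bar h\|_{L^1}\le \|\bar h\|_{L^2(\omega)}\Big(\int_{\bbr^{2d}}\omega^{-1}\,d\bx\,d\bv\Big)^{1/2},\qquad \|(1+\bv^2)^{1/2}\bar h\|_{L^1}\le \|\bar h\|_{L^2(\omega)}\Big(\int_{\bbr^{2d}}(1+\bx^2+\bv^2)^{-\alpha}\,d\bx\,d\bv\Big)^{1/2},
\]
and both integrals are finite thanks to $\alpha>3$ (the factor $(1+\bv^2)$ built into $\omega$ is precisely what cancels the first-moment weight). Hence $|L[\bar h]|\le C\|\bar h\|_{L^2(\omega)}(1+|\bv|)$.

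With this bound I would split $-2\int\bar h\omega\,\nabla_{\bv}\cdot(L[\bar h]g)=-2\int\bar h\omega\,L[\bar h]\cdot\nabla_{\bv}g+2d\int\bar h\omega\,a[\bar h]\,g$. The second piece is $\le C\|g\|_{L^2(\omega)}\|\bar h\|_{L^2(\omega)}^2$, and $\|g\|_{L^2(\omega)}\le c(t)$ by Lemma~\ref{lm-prlm-nois}(3). For the first piece, the linear growth of $L[\bar h]$ is absorbed by the extra $(1+\bv^2)$ weight and a weighted gradient of $g$: Cauchy--Schwarz and $(1+|\bv|)^2\le 2(1+\bv^2)$ give $-2\int\bar h\omega\,L[\bar h]\cdot\nabla_{\bv}g\le C\|\bar h\|_{L^2(\omega)}^2\,\|(1+\bv^2)^{1/2}\nabla_{\bv}g\|_{L^2(\omega)}$. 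The coefficient $c(t):=C\|(1+\bv^2)^{1/2}\nabla_{\bv}g(t)\|_{L^2(\omega)}$ is only bounded in $L^2(0,T)$ rather than pointwise, but Lemma~\ref{lm-prlm-nois}(3) applied to $g$ (at the fixed noise level $\sigma>0$) gives $\int_0^T\|(1+\bv^2)^{1/2}\nabla_{\bv}g\|_{L^2(\omega)}^2\,d\tau<\infty$, so $c\in L^1(0,T)$ by Cauchy--Schwarz in time. Collecting everything yields $\frac{d}{dt}\|\bar h\|_{L^2(\omega)}^2\le c(t)\|\bar h\|_{L^2(\omega)}^2$ with $c\in L^1(0,T)$, and Gronwall's inequality delivers the stated estimate. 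I expect the delicate points to be the verification of the weight-compatibility inequalities and, above all, the nonlocal control of $L[\bar h]$, where the hypothesis $\alpha>3$ is used in an essential way.
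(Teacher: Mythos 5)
Your proposal is correct, and the overall skeleton --- form the difference equation for $\bar h=f-g$, multiply by $2\bar h\,\omega$, integrate, and verify the weight-compatibility bounds $|\bv\cdot\nabla_{\bx}\omega|\le C\omega$, $|L[f]\cdot\nabla_{\bv}\omega|\le c(t)\omega$, $|\Delta_{\bv}\omega|\le C\omega$ --- coincides with the paper's treatment of the terms it labels $I_1$ through $I_4$. The one place where you genuinely diverge is the coupling term $I_5=-2\int\bar h\,\omega\,\nabla_{\bv}\cdot(L[\bar h]g)$, which you correctly identify as the heart of the matter. The paper integrates by parts to move $\nabla_{\bv}$ onto $\bar h\,\omega$, producing a factor $\|\nabla_{\bv}\bar h\|_{L^2(\omega)}$ that is then absorbed into the dissipation $-2\sigma\|\nabla_{\bv}\bar h\|_{L^2(\omega)}^2$ by Young's inequality, at the price of a coefficient $C\|(1+\bv^2)^{1/2}g\|_{L^2(\omega)}^2$ (with a hidden $\sigma^{-1}$) which is pointwise bounded in time by Lemma~\ref{lm-prlm-nois}(3). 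You instead keep the derivative on $L[\bar h]g$, use the pointwise nonlocal bound $|L[\bar h]|\le C\|\bar h\|_{L^2(\omega)}(1+|\bv|)$ (your Cauchy--Schwarz argument with $\int\omega^{-1}<\infty$ and $\int(1+\bx^2+\bv^2)^{-\alpha}<\infty$ is exactly how the paper exploits $\alpha>3$ elsewhere, e.g.\ in \eqref{eq-lone}), and pay with the factor $\|(1+\bv^2)^{\frac12}\nabla_{\bv}g\|_{L^2(\omega)}$, which is only in $L^2(0,T)$ via the time-integrated dissipation in Lemma~\ref{lm-prlm-nois}(3). Both routes rely essentially on $\sigma>0$ and both produce a $\sigma$-dependent Gronwall coefficient, so neither is stronger for the vanishing-noise limit; the paper's version needs only the pointwise-in-time bound on $g$ itself, while yours discards the dissipation of $\bar h$ entirely and instead spends the dissipation of $g$, which makes the Gronwall coefficient merely $L^1$ in time --- still perfectly adequate for the stated estimate.
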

\begin{proof}
Define $\bar{h}:=f-g$. It follows from the equation \eqref{eq-kine-nois} that
\begin{equation} \label{eq-kine-dfr-nois}
 \bar{h}_t + \bv \cdot \nabla_{\bx} \bar{h}+ \nabla_{\bv} \cdot (L[f]\bar{h}+L[\bar{h}]g)=\sigma \Delta_{\bv}\bar{h}.
\end{equation}
Multiplying \eqref{eq-kine-dfr-nois} by $2 \bar{h} \omega$, we deduce that
\begin{equation} \label{eq-dfrwt}
 \begin{aligned}
   &(\bar{h}^2 \omega)_t + \bv \cdot \nabla_{\bx} (\bar{h}^2 \omega)+ \nabla_{\bv} \cdot (L[f]\bar{h}^2 \omega)\\
   =&2\sigma \omega \bar{h}\Delta_{\bv}\bar{h}+\bar{h}^2 \bv \cdot \nabla_{\bx}\omega-\bar{h}^2 \omega \nabla_{\bv} \cdot L[f]\\&+\bar{h}^2 L[f] \cdot \nabla_{\bv}\omega-2 \bar{h} \omega \nabla_{\bv} \cdot (L[\bar{h}]g).
 \end{aligned}
\end{equation}
Integrating \eqref{eq-dfrwt} over $\bbr^{2d}$, we obtain
\begin{equation} \label{eq-dfrwtint}
 \begin{aligned}
   \frac{d}{dt}\|\bar{h}\|_{L^2(\omega)}^2
   =&2\sigma \int_{\bbr^{2d}}\omega \bar{h}\Delta_{\bv}\bar{h} d\bx d\bv+\int_{\bbr^{2d}}\bar{h}^2 \bv \cdot \nabla_{\bx}\omega d\bx d\bv- \int_{\bbr^{2d}}\bar{h}^2 \omega \nabla_{\bv} \cdot L[f] d\bx d \bv\\ &+\int_{\bbr^{2d}}\bar{h}^2 L[f] \cdot \nabla_{\bv}\omega d\bx d\bv-2\int_{\bbr^{2d}} \bar{h} \omega \nabla_{\bv} \cdot (L[\bar{h}]g) d\bx d\bv\\
   =&\sum_{k=1}^{5}I_k.
 \end{aligned}
\end{equation}
We estimate each $I_k(1\le k \le 5)$ as follows.
\begin{equation*}
 \begin{aligned}
  &I_1=-2 \sigma \|\nabla_{\bv}\bar{h}\|_{L^2(\omega)}^2 +\sigma \int_{\bbr^{2d}}\bar{h}^2 \Delta_{\bv} \omega d\bx d \bv,\\
  &I_2 \le C \int_{\bbr^{2d}} \bar{h}^2 \omega d\bx d\bv=C\|\bar{h}\|_{L^2(\omega)}^2,\\
  &I_3 \le C\|f\|_{L^1(\bbr^{2d})}\|\bar{h}\|_{L^2(\omega)}^2,\\
  &I_4 \le C\|(1+\bv^2)^{\frac12}f\|_{L^1(\bbr^{2d})}\|\bar{h}\|_{L^2(\omega)}^2,\\
  &I_5 =2  \int_{\bbr^{2d}}g L[\bar{h}] \cdot (\nabla_{\bv}\bar{h} \omega+\nabla_{\bv}\omega \bar{h})d\bx d\bv\\
  &\quad \le C\|\bar{h}\|_{L^2(\omega)}\|(1+\bv^2)^{\frac12}g\|_{L^2(\omega)}\|\nabla_{\bv}\bar{h}\|_{L^2(\omega)}
  + C\|g\|_{L^2(\omega)}\|\bar{h}\|_{L^2(\omega)}^2\\
 &\quad \le \sigma \|\nabla_{\bv}\bar{h}\|_{L^2(\omega)}^2+C\|(1+\bv^2)^{\frac12}g\|_{L^2(\omega)}^2\|\bar{h}\|_{L^2(\omega)}^2
  +C\|g\|_{L^2(\omega)}\|\bar{h}\|_{L^2(\omega)}^2
 \end{aligned}
\end{equation*}
Substituting these estimates into \eqref{eq-dfrwtint}, we obtain
\begin{equation} \label{eq-dfrwtinmt}
 \begin{aligned}
   &\frac{d}{dt}\|\bar{h}\|_{L^2(\omega)}^2+\sigma \|\nabla_{\bv}\bar{h}\|_{L^2(\omega)}^2\\
   \le &C\left(1+\|(1+\bv^2)^{\frac12}f\|_{L^1(\bbr^{2d})}+\|(1+\bv^2)^{\frac12}g\|_{L^2(\omega)}^2 \right)
   \|\bar{h}\|_{L^2(\omega)}^2.
 \end{aligned}
\end{equation}
Combining with Lemma \ref{lm-prlm-nois} and solving the above Gronwall's inequality give
\[
 \sup_{0\le t \le T}\|f(t)-g(t)\|_{L^2(\omega)}\le \|f_0-g_0\|_{L^2(\omega)}\exp\left(\int_0^T c(t)dt\right), \quad \forall T\ge 0.
\]
This completes the proof.
\end{proof}

\begin{lemma} \label{lm-str-nois}
Assume $(1+\bv^2)^{\frac12}f_0(\bx, \bv)\in  X$. If $f(t,\bx, \bv)$ is a smooth solution to \eqref{eq-kine-nois}, then
\begin{eqnarray*}
    &&(1)~\|(1+\bv^2)^{\frac12}f(t)\|_{X}^2+\sigma \int_0^t \|(1+\bv^2)^{\frac12}\nabla_{\bv}f(\tau)\|_{X}^2 d \tau \\
     &&\qquad \le \|(1+\bv^2)^{\frac12}f_0\|_{X}^2 \exp\left(\int_0^t c(\tau)d \tau\right), \quad \forall t \ge 0;\\
    &&(2)~\sup_{0\le t \le T}\|f(t)-g(t)\|_{X}\le \|f_0-g_0\|_{X} \exp\left(\int_0^T c(t)dt\right), \quad \forall T\ge 0,
  \end{eqnarray*}
where $f(t, \bx,\bv)$ and $g(t, \bx,\bv)$ are smooth solutions with initial data $f_0$ and $g_0$ satisfying the above conditions, respectively.
\end{lemma}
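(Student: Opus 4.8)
The plan is to upgrade the zeroth-order estimate of Lemma \ref{lm-prlm-nois}(3) to the full $X$-norm by deriving matching weighted $L^2$ estimates for $\nabla_{\bx}f$ and $\nabla_{\bv}f$, and then to close everything with a single Gronwall argument. The recurring inputs are the pointwise bounds on $L[f]$ and its derivatives in terms of the weighted $L^1$ norms of $f$: writing $L[f]=\mathbf{b}(t,\bx)-a(t,\bx)\bv$ one reads off $|\nabla_{\bv}\cdot L[f]|=d\,a\le C\|f\|_{L^1}$ and $|\nabla_{\bx}L[f]|\le C(1+|\bv|)\|(1+\bv^2)^{\frac12}f\|_{L^1}$, and all such factors are under control by Lemma \ref{lm-prlm-nois}(1)--(2). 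I also use that the weights obey $|\nabla_{\bx}\omega|+|\nabla_{\bv}\omega|+|\Delta_{\bv}\omega|\le C\omega$ and that $\alpha>3$ renders $\omega^{-1}$ integrable, so that the weighted $L^1$ norms are dominated by the $L^2(\omega)$ norm through Cauchy--Schwarz.

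First I would differentiate \eqref{eq-kine-nois} in $\bx$ and in $\bv$, obtaining transport--diffusion equations for $\nabla_{\bx}f$ and $\nabla_{\bv}f$ with source terms, exactly as in \eqref{eq-wtns-dx} and \eqref{eq-wtns-dv} but now carrying $\sigma\Delta_{\bv}$ on the left. The crucial feature is that differentiating the free-streaming term $\bv\cdot\nabla_{\bx}$ in $\bv$ produces the coupling source $-\nabla_{\bx}f$ in the $\nabla_{\bv}f$ equation. For the $\nabla_{\bx}f$ equation I would multiply by $2(1+\bv^2)\nu\,\nabla_{\bx}f$ (the pairing associated with measuring $(1+\bv^2)^{\frac12}\nabla_{\bx}f$ in $L^2(\nu)$) and integrate by parts: the $\bx$-transport term drops because the weight is $\bx$-independent, the term $\nabla_{\bv}\cdot(L[f]\otimes\nabla_{\bx}f)$ produces only the bounded factors $\nabla_{\bv}\cdot L[f]$ and $L[f]\cdot\nabla_{\bv}(\text{weight})$, and the diffusion yields the good dissipation $-2\sigma\|(1+\bv^2)^{\frac12}\nabla_{\bv}\nabla_{\bx}f\|_{L^2(\nu)}^2$ plus lower-order weight remainders. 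The two genuine sources $\nabla_{\bx}L[f]\cdot\nabla_{\bv}f$ and $f\,\nabla_{\bx}\nabla_{\bv}\cdot L[f]$ are then bounded by Cauchy--Schwarz and Young, with the top-order piece absorbed into the dissipation. I would do the analogous computation for $\nabla_{\bv}f$, multiplying by $2(1+\bv^2)\nabla_{\bv}f$; here the decisive term is the coupling $-\nabla_{\bx}f$, estimated by $\|(1+\bv^2)^{\frac12}\nabla_{\bv}f\|_{L^2}\,\|(1+\bv^2)^{\frac12}\nabla_{\bx}f\|_{L^2}$, and hence controlled precisely because $\nabla_{\bx}f$ is carried in the heavier space $L^2(\nu)$.

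Adding the two derivative estimates to Lemma \ref{lm-prlm-nois}(3), using Young's inequality to absorb the remaining top-order dissipation terms and collecting all coefficients into a single $c(t)$ (finite by the a priori $L^1$, weighted $L^1$ and $L^2(\omega)$ bounds), I obtain
\[
\frac{d}{dt}\|(1+\bv^2)^{\frac12}f\|_X^2 + \sigma\|(1+\bv^2)^{\frac12}\nabla_{\bv}f\|_X^2 \le c(t)\,\|(1+\bv^2)^{\frac12}f\|_X^2,
\]
and Gronwall's inequality gives part (1). For part (2) I would set $\bar h:=f-g$, $\overline F:=\nabla_{\bx}\bar h$, $\overline G:=\nabla_{\bv}\bar h$, write the linear difference equations (the analogue of \eqref{eq-kine-dfr-nois} together with its $\bx$- and $\bv$-derivatives, as in \eqref{eq-kine-dfr-dx}--\eqref{eq-kine-dfr-dv}), and run the identical weighted energy estimates. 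The nonlinear differences now enter through $L[\bar h]$ paired against derivatives of $g$, so they are controlled by $\|\bar h\|$ in the relevant weighted norm times the $X$-norm of $g$ furnished by part (1); combined with the $L^2(\omega)$ stability of Lemma \ref{lm-stb-nois} this closes a Gronwall inequality for $\|f-g\|_X$.

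The main obstacle is the weight bookkeeping that makes the descending-weight design of $X$ self-consistent: one must check that every transport coupling and every nonlinear source lands in a norm no heavier than the one available for the quantity it is paired with. The delicate points are the coupling $\nabla_{\bv}f\leftrightarrow\nabla_{\bx}f$ produced by free streaming (which is what forces the heavier weight $\nu$ onto $\nabla_{\bx}f$) and the linear-in-$|\bv|$ growth of $\nabla_{\bx}L[f]$, whose $\bv$-moments must be dominated by the extra $(1+\bv^2)^{\frac12}$ factor together with the diffusion dissipation. Once these matchings are verified, the remaining computations are routine.
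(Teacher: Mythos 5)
Your proposal follows essentially the same route as the paper: differentiate the equation in $\bx$ and $\bv$, test with the weighted multipliers $2(1+\bv^2)^2\nabla_{\bx}f$ and $2(1+\bv^2)\nabla_{\bv}f$, absorb the top-order source terms into the $\sigma$-dissipation via Young's inequality, and close with Gronwall; the stability part likewise mirrors the paper's difference-equation argument combined with Lemma \ref{lm-stb-nois}. You correctly identify the key structural point — the free-streaming coupling $-\nabla_{\bx}f$ forcing the heavier weight $\nu$ on $\nabla_{\bx}f$ — so the plan is sound and matches the paper's proof.
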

\begin{proof}
Based on Lemma \ref{lm-prlm-nois} and \ref{lm-stb-nois}, we only need to estimate the first order derivatives.
Applying $\nabla_{\bx}$ to \eqref{eq-kine-nois} yields
\begin{equation} \label{eq-ns-dx}
 (\nabla_{\bx} f)_t +\bv \cdot \nabla_{\bx} (\nabla_{\bx} f)+ \nabla_{\bv} \cdot (L[f]\otimes \nabla_{\bx} f)=\sigma \Delta_{\bv}\nabla_{\bx} f-\nabla_{\bx}L[f] \cdot \nabla_{\bv} f- f\nabla_{\bx} \nabla_{\bv} \cdot L[f].
\end{equation}
Multiplying \eqref{eq-ns-dx} by $2(1+\bv^2)^2 \nabla_{\bx} f$, we have
\begin{equation} \label{eq-ns-wdx}
 \begin{aligned}
  &(|\nabla_{\bx} f|^2(1+\bv^2)^2)_t+\bv \cdot \nabla_{\bx} (|\nabla_{\bx} f|^2(1+\bv^2)^2)+ \nabla_{\bv} \cdot (L[f]|\nabla_{\bx} f|^2(1+\bv^2)^2)\\
  =&2\sigma (1+\bv^2)^2\Delta_{\bv}\nabla_{\bx} f \cdot \nabla_{\bx} f-(1+\bv^2)^2|\nabla_{\bx} f|^2\nabla_{\bv} \cdot L[f]\\
  &+|\nabla_{\bx} f|^2 L[f]\cdot \nabla_{\bv}(1+\bv^2)^2-2(1+\bv^2)^2\nabla_{\bx} f \cdot(\nabla_{\bx} L[f]\cdot \nabla_{\bv}f+f\nabla_{\bx}\nabla_{\bv} \cdot L[f]).
 \end{aligned}
\end{equation}
Integrating $\eqref{eq-ns-wdx}$ over $\bbr^{2d}$, we deduce that
\begin{equation} \label{eq-ns-wdxit}
 \begin{aligned}
   \frac{d}{dt}\|(1+\bv^2)^{\frac12}\nabla_{\bx} f\|_{L^2(\nu)}^2\le&2\sigma \int_{\bbr^{2d}} (1+\bv^2)^2\Delta_{\bv}\nabla_{\bx} f \cdot \nabla_{\bx} f d\bx d\bv\\
   &- \int_{\bbr^{2d}}(1+\bv^2)^2|\nabla_{\bx} f|^2\nabla_{\bv} \cdot L[f] d\bx d\bv\\
   &+\int_{\bbr^{2d}}|\nabla_{\bx} f|^2 L[f]\cdot \nabla_{\bv}(1+\bv^2)^2 d\bx d\bv\\
   &-\int_{\bbr^{2d}}2(1+\bv^2)^2\nabla_{\bx} f \cdot(\nabla_{\bx}L[f]\cdot \nabla_{\bv}f+f\nabla_{\bx}\nabla_{\bv} \cdot L[f]) d\bx d\bv.
 \end{aligned}
\end{equation}
We estimate the right-hand side of \eqref{eq-ns-wdxit} term by term.
\begin{equation*}
 \begin{aligned}
 &2\sigma \int_{\bbr^{2d}} (1+\bv^2)^2\Delta_{\bv}\nabla_{\bx} f \cdot \nabla_{\bx} f d\bx d\bv \\
 =& -2\sigma \|(1+\bv^2)^{\frac12}\nabla_{\bv} \nabla_{\bx} f\|_{L^2(\nu)}^2 +\sigma \int_{\bbr^{2d}}|\nabla_{\bx} f|^2 \Delta_{\bv}(1+\bv^2)^2  d\bx d\bv,
 \end{aligned}
\end{equation*}
\begin{equation*}
  \begin{aligned}
 &- \int_{\bbr^{2d}}(1+\bv^2)^2|\nabla_{\bx} f|^2\nabla_{\bv} \cdot L[f] d\bx d\bv \\
 \le &C\|f\|_{L^1(\bbr^{2d})}\|(1+\bv^2)^{\frac12}\nabla_{\bx} f\|_{L^2(\nu)}^2,
  \end{aligned}
\end{equation*}
\begin{equation*}
 \begin{aligned}
 &\int_{\bbr^{2d}}|\nabla_{\bx} f|^2 L[f]\cdot \nabla_{\bv}(1+\bv^2)^2 d\bx d\bv \\
 \le& C\|(1+\bv^2)^{\frac12}f\|_{L^1(\bbr^{2d})}\|(1+\bv^2)^{\frac12}\nabla_{\bx} f\|_{L^2(\nu)}^2,
 \end{aligned}
\end{equation*}
\begin{equation*}
 \begin{aligned}
  &-\int_{\bbr^{2d}}2(1+\bv^2)^2\nabla_{\bx} f \cdot \Big(\nabla_{\bx} L[f]\cdot \nabla_{\bv}f+f\nabla_{\bx}\nabla_{\bv} \cdot L[f] \Big) d\bx d\bv\\
  =&\int_{\bbr^{2d}} 2f \nabla_{\bx} L[f]:\left[(1+\bv^2)^2 \nabla_{\bx}\nabla_{\bv} f+\nabla_{\bx}f \otimes \nabla_{\bv}(1+\bv^2)^2 \right]d\bx d\bv\\
  \le & \|(1+\bv^2)^{\frac12}f\|_{L^1(\bbr^{2d})}\|(1+\bv^2)^{\frac12} f\|_{L^2(\omega)}\|(1+\bv^2)^{\frac12}\nabla_{\bv} \nabla_{\bx} f\|_{L^2(\nu)}\\
  &+\|(1+\bv^2)^{\frac12}f\|_{L^1(\bbr^{2d})}\|(1+\bv^2)^{\frac12}f\|_{L^2(\omega)}\|(1+\bv^2)^{\frac12} \nabla_{\bx} f\|_{L^2(\nu)}\\
  \le & \sigma \|(1+\bv^2)^{\frac12}\nabla_{\bv} \nabla_{\bx} f\|_{L^2(\nu)}^2+C\|(1+\bv^2)^{\frac12}f\|_{L^1(\bbr^{2d})}^2\|(1+\bv^2)^{\frac12}f\|_{L^2(\omega)}^2\\
  &+\|(1+\bv^2)^{\frac12} \nabla_{\bx} f\|_{L^2(\nu)}^2
 \end{aligned}
\end{equation*}
Substituting these estimates into \eqref{eq-ns-wdxit}, we obtain
\begin{equation} \label{eq-ns-wdxitst}
 \begin{aligned}
   &\frac{d}{dt}\|(1+\bv^2)^{\frac12}\nabla_{\bx} f\|_{L^2(\nu)}^2+\sigma \|(1+\bv^2)^{\frac12}\nabla_{\bv} \nabla_{\bx} f\|_{L^2(\nu)}^2\\
   \le&C \Big(1+\|(1+\bv^2)^{\frac12}f\|_{L^1(\bbr^{2d})}\Big)\|(1+\bv^2)^{\frac12} \nabla_{\bx} f\|_{L^2(\nu)}^2\\&+C\|(1+\bv^2)^{\frac12}f\|_{L^1(\bbr^{2d})}^2\|(1+\bv^2)^{\frac12}f\|_{L^2(\omega)}^2.
 \end{aligned}
\end{equation}
Applying $\nabla_{\bv}$ to \eqref{eq-kine-nois}, we deduce
\begin{equation} \label{eq-ns-dv}
 (\nabla_{\bv} f)_t +\bv \cdot \nabla_{\bx} (\nabla_{\bv} f)+ \nabla_{\bv} \cdot (L[f]\otimes \nabla_{\bv} f)=\sigma \Delta_{\bv}\nabla_{\bv} f-\nabla_{\bv}L[f] \cdot \nabla_{\bv} f- \nabla_{\bx} f.
\end{equation}
Multiplying \eqref{eq-kine-nois} by $2(1+\bv^2)\nabla_{\bv} f$, we get
\begin{equation} \label{eq-ns-wdv}
 \begin{aligned}
  &(|\nabla_{\bv} f|^2(1+\bv^2))_t+\bv \cdot \nabla_{\bx} (|\nabla_{\bv} f|^2(1+\bv^2))+ \nabla_{\bv} \cdot (L[f]|\nabla_{\bv} f|^2(1+\bv^2))\\
  =&2\sigma (1+\bv^2)\Delta_{\bv}\nabla_{\bv} f \cdot \nabla_{\bv} f+|\nabla_{\bv} f|^2 L[f]\cdot \nabla_{\bv}(1+\bv^2)-2(1+\bv^2)\nabla_{\bv} f \cdot \nabla_{\bx} f\\
  &-(1+\bv^2)|\nabla_{\bv} f|^2\nabla_{\bv} \cdot L[f]
  -2(1+\bv^2)\nabla_{\bv} f \cdot \nabla_{\bv} L[f]\cdot \nabla_{\bv}f .
 \end{aligned}
\end{equation}
Similarly, we have
\begin{equation} \label{eq-ns-wdvitst}
 \begin{aligned}
   &\frac{d}{dt}\|(1+\bv^2)^{\frac12}\nabla_{\bv} f\|_{L^2(\bbr^{2d})}^2+\sigma \|(1+\bv^2)^{\frac12}\nabla_{\bv} \nabla_{\bv} f\|_{L^2(\bbr^{2d})}^2\\
   \le&C \Big(1+\|(1+\bv^2)^{\frac12}f\|_{L^1(\bbr^{2d})}\Big)\|(1+\bv^2)^{\frac12} \nabla_{\bv} f\|_{L^2(\bbr^{2d})}^2\\&+\|(1+\bv^2)^{\frac12}\nabla_{\bx} f\|_{L^2(\nu)}^2.
 \end{aligned}
\end{equation}
Combining \eqref{eq-l2fwtint}, \eqref{eq-ns-wdxitst}, \eqref{eq-ns-wdvitst} and Lemma \ref{lm-prlm-nois}, we arrive at
\begin{equation} \label{eq-ns-xdt}
 \frac{d}{dt}\|(1+\bv^2)^{\frac12} f\|_{X}^2+\sigma \|(1+\bv^2)^{\frac12}\nabla_{\bv} f\|_{X}^2
   \le c(t)\|(1+\bv^2)^{\frac12} f\|_{X}^2.
\end{equation}
Solving the above Gronwall's inequality yields
\begin{equation} \label{eq-ns-fxnm}
 \begin{aligned}
 &\|(1+\bv^2)^{\frac12}f(t)\|_{X}^2+\sigma \int_0^t \|(1+\bv^2)^{\frac12}\nabla_{\bv}f(\tau)\|_{X}^2 d \tau \\
      \le& \|(1+\bv^2)^{\frac12}f_0\|_{X}^2 \exp\left(\int_0^t c(\tau)d \tau\right), \quad \forall t \ge 0.
 \end{aligned}
\end{equation}
(2) For two smooth solutions $f(t,\bx,\bv)$ and $g(t,\bx,\bv)$ with initial data $f_0$ and $g_0$ satisfying the above initial conditions, respectively. We define
\[
 \bar{h}:=f-g, \quad \overline{F}:=\nabla_{\bx}f-\nabla_{\bx}g \quad \text{and} \quad \overline{G}:=\nabla_{\bv}f-\nabla_{\bv}g.
\]
It follows from \eqref{eq-ns-dx} that
\begin{equation} \label{eq-ns-dfr-dx}
 \begin{aligned}
   &\overline{F}_t + \bv \cdot \nabla_{\bx} \overline{F}+ \nabla_{\bv} \cdot (L[f]\otimes \overline{F})\\
   =&\sigma \Delta_{\bv}\overline{F}-\nabla_{\bv} \cdot (L[\bar{h}]\otimes \nabla_{\bx}g)-\nabla_{\bx}L[\bar{h}]\cdot \nabla_{\bv}f\\
   &-\nabla_{\bx}L[g]\cdot \nabla_{\bv}\bar{h}- \bar{h}\nabla_{\bx}\nabla_{\bv} \cdot L[g]-f \nabla_{\bx}\nabla_{\bv} \cdot L[\bar{h}].
 \end{aligned}
\end{equation}
Multiplying \eqref{eq-ns-dfr-dx} by $2(1+\bv^2)\overline{F}$, we obtain
\begin{equation} \label{eq-ns-dfr-dxe}
 \begin{aligned}
  &((1+\bv^2)\overline{F}^2)_t+\bv \cdot \nabla_{\bx} ((1+\bv^2)\overline{F}^2)+ \nabla_{\bv} \cdot (L[f] (1+\bv^2)\overline{F}^2)\\
  =&2\sigma (1+\bv^2)\Delta_{\bv} \overline{F} \cdot \overline{F}-2(1+\bv^2)\nabla_{\bv} \cdot (L[\bar{h}]\otimes \nabla_{\bx}g) \cdot \overline{F}\\
  &+\overline{F}^2 L[f]\cdot \nabla_{\bv}(1+\bv^2)-(1+\bv^2) \overline{F}^2\nabla_{\bv} \cdot L[f]\\
  &-2(1+\bv^2)\overline{F} \cdot \Big(\nabla_{\bx}L[\bar{h}]\cdot \nabla_{\bv}f+f \nabla_{\bx}\nabla_{\bv} \cdot L[\bar{h}]+\nabla_{\bx}L[g]\cdot \nabla_{\bv}\bar{h}+ \bar{h}\nabla_{\bx}\nabla_{\bv} \cdot L[g]\Big).
 \end{aligned}
\end{equation}
Integrating \eqref{eq-ns-dfr-dxe} over $\bbr^{2d}$, we have
\begin{equation} \label{eq-ns-dfr-dxit}
 \begin{aligned}
   \frac{d}{dt}\|\overline{F}\|_{L^2(\nu)}^2=&2\sigma \int_{\bbr^{2d}}(1+\bv^2)\Delta_{\bv} \overline{F} \cdot \overline{F}d \bx d\bv\\
   &-\int_{\bbr^{2d}}2(1+\bv^2)\nabla_{\bv} \cdot (L[\bar{h}]\otimes \nabla_{\bx}g) \cdot \overline{F}d \bx d\bv\\
   &+\int_{\bbr^{2d}}\Big(\overline{F}^2 L[f]\cdot \nabla_{\bv}(1+\bv^2)-(1+\bv^2) \overline{F}^2\nabla_{\bv} \cdot L[f]\Big) d \bx d\bv\\
   &-2\int_{\bbr^{2d}}(1+\bv^2)\overline{F} \cdot \Big(\nabla_{\bx}L[\bar{h}]\cdot \nabla_{\bv}f+f \nabla_{\bx}\nabla_{\bv} \cdot L[\bar{h}]\\
   &\qquad \quad+\nabla_{\bx}L[g]\cdot \nabla_{\bv}\bar{h}+ \bar{h}\nabla_{\bx}\nabla_{\bv} \cdot L[g]\Big)d \bx d\bv\\
  =&\sum_{i=1}^{4} J_i
 \end{aligned}
\end{equation}
We estimate each $J_i$ as follows.
\[
J_1=-2\sigma \|\nabla_{\bv}\overline{F}\|_{L^2(\nu)}^2+\sigma \int_{\bbr^{2d}}\overline{F}^2\Delta_{\bv}(1+\bv^2)d \bx d\bv,
\]
\begin{equation*}
 \begin{aligned}
  J_2=&2\int_{\bbr^{2d}}(L[\bar{h}]\otimes \nabla_{\bx}g): \nabla_{\bv}((1+\bv^2)\overline{F})d \bx
  d\bv\\
  \le&C\|(1+\bv^2)^{\frac12}\nabla_{\bx}g\|_{L^2(\nu)}\|\nabla_{\bv}\overline{F}\|_{L^2(\nu)}\|\bar{h}\|
  _{L^2(\omega)}+C\|\nabla_{\bx}g\|_{L^2(\nu)}\|\bar{h}\|_{L^2(\omega)}
  \|\overline{F}\|_{L^2(\nu)}\\
  \le&\sigma \|\nabla_{\bv}\overline{F}\|_{L^2(\nu)}^2+C\|(1+\bv^2)^{\frac12}\nabla_{\bx}g\|_{L^2(\nu)}^2\|\bar{h}\|
  _{L^2(\omega)}^2+\|\overline{F}\|_{L^2(\nu)}^2,
 \end{aligned}
\end{equation*}
\begin{equation*}
  J_3 \le C\|(1+\bv^2)^{\frac12}f\|_{L^1(\bbr^{2d})}\|\overline{F}\|_{L^2(\nu)}^2
  +C\|f\|_{L^1(\bbr^{2d})}\|\overline{F}\|_{L^2(\nu)}^2,
\end{equation*}
\begin{equation*}
 \begin{aligned}
 J_4=&2\int_{\bbr^{2d}} \nabla_{\bx}\nabla_{\bv}(\bar{h}(1+\bv^2)):\Big( \bar{h}\nabla_{\bx}L[g]+f\nabla_{\bx}L[\bar{h}]\Big)d\bx d\bv\\
 =&\int_{\bbr^{2d}}\Big(2(1+\bv^2) \nabla_{\bx}\overline{G}+4\overline{F}\otimes \bv\Big):\Big( \bar{h}\nabla_{\bx}L[g]+f\nabla_{\bx}L[\bar{h}]\Big)d\bx d\bv\\
 \le&C\|(1+\bv^2)^{\frac12}g\|_{L^1(\bbr^{2d})}\|\nabla_{\bv}\overline{F}\|_{L^2(\nu)}\|\bar{h}\|
  _{L^2(\omega)}+C\|(1+\bv^2)^{\frac12}g\|_{L^1(\bbr^{2d})}\|\bar{h}\|
  _{L^2(\omega)}\|\overline{F}\|_{L^2(\nu)}\\
  &+C\|(1+\bv^2)^{\frac12}f\|_{L^2(\omega)}\|\nabla_{\bv}\overline{F}\|_{L^2(\nu)}\|\bar{h}\|
  _{L^2(\omega)}+C\|f\|_{L^2(\omega)}\|\bar{h}\|
  _{L^2(\omega)}\|\overline{F}\|_{L^2(\nu)}\\
  \le&\sigma \|\nabla_{\bv}\overline{F}\|_{L^2(\nu)}^2+C\Big(\|(1+\bv^2)^{\frac12}g\|_{L^1(\bbr^{2d})}^2 +\|(1+\bv^2)^{\frac12}f\|_{L^2(\omega)}^2\Big)\|\bar{h}\|
  _{L^2(\omega)}^2+C\|\overline{F}\|_{L^2(\nu)}^2.
 \end{aligned}
\end{equation*}
Substituting these estimates into \eqref{eq-ns-dfr-dxit}, we obtain
\begin{equation} \label{eq-ns-dfr-dxits}
 \begin{aligned}
 \frac{d}{dt}\|\overline{F}\|_{L^2(\nu)}^2 \le&C\Big(1+\|(1+\bv^2)^{\frac12}f\|_{L^1(\bbr^{2d})}\Big)\|\overline{F}\|_{L^2(\nu)}^2\\
 &+C\Big(\|(1+\bv^2)^{\frac12}g\|_{L^1(\bbr^{2d})}^2+\|(1+\bv^2)^{\frac12}f\|_{L^2(\omega)}^2
 +\|(1+\bv^2)^{\frac12}\nabla_{\bx}g\|_{L^2(\nu)}^2 \Big)\|\bar{h}\|
  _{L^2(\omega)}^2
 \end{aligned}
\end{equation}
It follows from \eqref{eq-ns-dv} that
\begin{equation} \label{eq-ns-dfr-dv}
 \begin{aligned}
   \overline{G}_t &+ \bv \cdot \nabla_{\bx} \overline{G}+ \nabla_{\bv} \cdot (L[f]\otimes \overline{G})\\
   =&\sigma \Delta_{\bv}\overline{G}-\nabla_{\bv} \cdot (L[\bar{h}]\otimes \nabla_{\bv}g)
   -\nabla_{\bv}L[\bar{h}]\cdot \nabla_{\bv} f- \nabla_{\bv}L[g] \cdot \overline{G}-\overline{F}.
 \end{aligned}
\end{equation}
Multiplying \eqref{eq-ns-dfr-dv} by $2\overline{G}$ yields
\begin{equation} \label{eq-ns-dfr-dve}
 \begin{aligned}
  &(\overline{G}^2)_t+\bv \cdot \nabla_{\bx} (\overline{G}^2)+ \nabla_{\bv} \cdot (L[f] \overline{G}^2)\\
  =&-\overline{G}^2\nabla_{\bv} \cdot L[f]-2\sigma \Delta_{\bv} \overline{G} \cdot \overline{G}-2\nabla_{\bv} \cdot (L[\bar{h}]\otimes \nabla_{\bv}g) \cdot \overline{G}\\
  &-2\overline{G} \cdot \nabla_{\bv}L[\bar{h}]\cdot \nabla_{\bv}f-2\overline{G}\cdot \nabla_{\bv}L[g]\cdot \overline{G}-2\overline{F} \cdot \overline{G}.
 \end{aligned}
\end{equation}
Integrating \eqref{eq-ns-dfr-dve} over $\bbr^{2d}$ and performing integration by parts, we have
\begin{equation} \label{eq-ns-dfr-dvit}
 \begin{aligned}
  \frac{d}{dt} \|\overline{G}\|_{L^2(\bbr^{2d})}\le & C(1+\|f\|_{L^1(\bbr^{2d})}+\|g\|_{L^1(\bbr^{2d})})\|\overline{G}\|_{L^2(\bbr^{2d})}^2
  + \|\overline{F}\|_{L^2(\nu)}^2\\
  &+C\Big(\|\nabla_{\bv}f\|_{L^2(\bbr^{2d})}^2 +\|(1+\bv^2)^{\frac12}\nabla_{\bv}g\|_{L^2(\bbr^{2d})}^2 \Big)\|\bar{h}\|_{L^2(\omega)}^2.
 \end{aligned}
\end{equation}
Combining \eqref{eq-dfrwtinmt}, \eqref{eq-ns-dfr-dxits}, \eqref{eq-ns-dfr-dvit} and using\eqref{eq-ns-fxnm}, Lemma \ref{lm-prlm-nois}, we have
\begin{equation} \label{eq-str-dfr}
 \frac{d}{dt}\|f-g\|_X^2\le c(t)\|f-g\|_X^2,
\end{equation}
which implies
\begin{equation} \label{eq-str-sta}
 \sup_{0\le t \le T}\|f(t)-g(t)\|_{X}\le \|f_0-g_0\|_{X} \exp\left(\int_0^T c(t)dt\right), \quad \forall T\ge 0.
\end{equation}
This completes the proof.
\end{proof}
\subsection{Proof of Theorem \ref{tm-wek-nois} and \ref{tm-str-nois}}
We first mollify the initial data by convolution,i.e.,$$f_0^{\e}(\bx, \bv)=f_0 \ast j_{\e}(\bx, \bv),$$ where $j_{\e}$ is the standard mollifier. Using the contraction principle, we can obtain the local smooth solution by the standard procedure. Combining with the a priori estimate in Lemma \ref{lm-prlm-nois}(3), one can extend the local smooth solution to be global-in-time.

Then using the stability estimate in Lemma \ref{lm-stb-nois}, we infer that
\begin{equation} \label{eq-wkcov-ns}
\sup_{0\le t \le T}\|f^{\e_i}(t)-f^{\e_j}(t)\|_{L^{2}(\omega)}\le \|f_0^{\e_i}-f_0^{\e_j}\|_{L^{2}(\omega)}\exp\left(\int_0^T c(t)dt\right), \quad \forall T\ge 0,
\end{equation}
where $f^{\e_i}(t,\bx, \bv)$ and $f^{\e_j}(t,\bx, \bv)$ are smooth solution with initial data $f_0^{\e_i}$ and $f_0^{\e_j}$, respectively.
From \eqref{eq-wkcov-ns}, we know there exists $f(t,\bx, \bv) \in C([0,T],L^2(\omega))$ such that
\[
 f^{\e_i}(t,\bx, \bv) \to f(t,\bx, \bv) \quad \text{in $C([0,T],L^2(\omega))\ \forall T>0$, as $\e_i \to 0$}.
\]
Due to the arbitrariness of $T$, we know $f(t,\bx, \bv) \in C([0,+\infty),L^2(\omega))$. It is easy to see that $f(t,\bx, \bv)$ is a weak solution to \eqref{eq-kine-nois}.
Take smooth initial data $f_0^{\e_i}$ and $g_0^{\e_i}$. We also have
\begin{equation} \label{eq-wkstacov}
\sup_{0\le t \le T}\|f^{\e_i}(t)-g^{\e_i}(t)\|_{L^{2}(\omega)}\le \|f_0^{\e_i}-g_0^{\e_i}\|_{L^{2}(\omega)}\exp\left(\int_0^T c(t)dt\right), \quad \forall T\ge 0,
\end{equation}
Letting $\e_i \to 0$, we obtain the stability estimate for weak solutions to \eqref{eq-kine-nois}, which also amounts to uniqueness of the weak solution. While Theorem \ref{tm-wek-nois}(1) can be easily proved by using the lower semi-continuity of the weakly and weakly-$\star$ convergent sequence.

Theorem \ref{tm-str-nois} can be proved in the same way. We omit its proof for brevity. Thus we complete the proof.
%%%%%%%%%%%%%%%%%%%%%%%%%%%%%%%%%%%%%%%%%%%%%%%%%%%%%%%%%%%%%%%%%%%%%%%%%%%%%%%%%%%%%%%%%%%%%%%%%%%
%
%         Sect. 4  Vanishing Noise Limit
%
%%%%%%%%%%%%%%%%%%%%%%%%%%%%%%%%%%%%%%%%%%%%%%%%%%%%%%%%%%%%%%%%%%%%%%%%%%%%%%%%%%%%%%%%%%%%%%%%%%%
\section{Vanishing Noise Limit}
\setcounter{equation}{0}
In this section, we study the vanishing noise limit as $\sigma$ tends to $0$. In fact, we can pass to the limit for both weak and strong solutions to \eqref{eq-kine-nois}. Our results are as follows.
\begin{theorem} \label{tm-wek-vnois}
Assume $(1+\bv^2)^{\frac12}f_0(\bx, \bv)\in  L^2(\omega)$. Then \eqref{eq-kine-wtnois} admits a unique weak solution $f(t, \bx,\bv) \in C([0,+\infty),L^2(\omega))$.
Moreover, it hold that
\begin{eqnarray*}
    &&(1)~\|(1+\bv^2)^{\frac12}f(t)\|_{L^2(\omega)} \le \|(1+\bv^2)^{\frac12}f_0\|_{L^2(\omega)} \exp\left(\int_0^t c(\tau)d \tau\right), \quad a.e.\ t \ge 0;\\
    &&(2)~\sup_{0\le t \le T}\|f(t)-g(t)\|_{L^2(\omega)}\le \|f_0-g_0\|_{L^2(\omega)}\exp\left(\int_0^T c(t)dt\right), \quad \forall T\ge 0,
  \end{eqnarray*}
where $f(t, \bx,\bv)$ and $g(t, \bx,\bv)$ are weak solutions with initial data $f_0$ and $g_0$ satisfying the above conditions, respectively.
\end{theorem}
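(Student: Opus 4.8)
The plan is to obtain $f$ as the vanishing noise limit of the solutions produced in Theorem \ref{tm-wek-nois}. For each $\sigma\in(0,1]$ let $f^\sigma$ denote the unique weak solution of \eqref{eq-kine-nois} with datum $f_0$. The crucial observation is that the function $c(t)$ appearing in Theorem \ref{tm-wek-nois}(1) and in Lemma \ref{lm-prlm-nois} is independent of $\sigma$, so the bound $\|(1+\bv^2)^{\frac12}f^\sigma(t)\|_{L^2(\omega)}^2+\sigma\int_0^t\|(1+\bv^2)^{\frac12}\nabla_\bv f^\sigma\|_{L^2(\omega)}^2\,d\tau\le\|(1+\bv^2)^{\frac12}f_0\|_{L^2(\omega)}^2\exp(\int_0^t c)$ is uniform in $\sigma$. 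First I would extract from $\{f^\sigma\}$ a subsequence converging weakly-$\star$ in $L^\infty(0,T;L^2(\omega))$ to some limit $f$, for every $T$; the mass conservation and weighted $L^1$ bounds of Lemma \ref{lm-prlm-nois}(1)--(2) are inherited uniformly as well.

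The heart of the argument is the passage to the limit in the nonlinear term $\nabla_\bv\cdot(L[f^\sigma]f^\sigma)$, and this is where the velocity averaging lemma enters. I would rewrite $L[f^\sigma](t,\bx,\bv)=\varphi\ast_\bx\bj^\sigma-(\varphi\ast_\bx\rho^\sigma)\bv$, where $\rho^\sigma=\int_{\bbr^d}f^\sigma\,d\bv$ and $\bj^\sigma=\int_{\bbr^d}f^\sigma\bv\,d\bv$ are velocity averages; the weight $\omega$ with $\alpha>3$ is exactly what guarantees $\int\bv^2/\omega\,d\bv<\infty$, so these moments are controlled by $\|(1+\bv^2)^{\frac12}f^\sigma\|_{L^2(\omega)}$ and the $\bx$-convolution with the bounded kernel $\varphi$ is harmless. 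Using \eqref{eq-kine-nois} to bound $\partial_t f^\sigma+\bv\cdot\nabla_\bx f^\sigma$ in a space of $\bv$-derivatives, the averaging lemma yields strong compactness of $\rho^\sigma$ and $\bj^\sigma$ in $L^2_{\mathrm{loc}}$ of $(t,\bx)$, hence strong convergence of $L[f^\sigma]$. Pairing this strong convergence with the weak-$\star$ convergence of $f^\sigma$ lets me identify $L[f^\sigma]f^\sigma\rightharpoonup L[f]f$ in the sense of distributions. The diffusion term is discarded since $\sigma\langle\Delta_\bv f^\sigma,\psi\rangle=-\sigma\langle\nabla_\bv f^\sigma,\nabla_\bv\psi\rangle$ and $\sigma\|\nabla_\bv f^\sigma\|_{L^2}\le\sqrt\sigma\,(\sigma\|\nabla_\bv f^\sigma\|_{L^2}^2)^{1/2}\to0$. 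Passing to the limit in the weak formulation then shows that $f$ solves \eqref{eq-kine-wtnois} in $\mathcal D'$, and $f\in C([0,+\infty),L^2(\omega))$ follows from the equation.

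The two quantitative estimates follow by stability under weak convergence. Estimate (1) is obtained from Theorem \ref{tm-wek-nois}(1) by weak lower semicontinuity of the $L^2(\omega)$ norm, simply dropping the nonnegative dissipation integral as $\sigma\to0$. For estimate (2) I would apply Lemma \ref{lm-stb-nois} to the pair $f^\sigma,g^\sigma$ issuing from $f_0,g_0$ and let $\sigma\to0$; again by lower semicontinuity this transfers the stability bound to the limits $f,g$, and in particular gives uniqueness. Because the limit is characterized by (2), this bound also forces the whole family $\{f^\sigma\}$, not merely a subsequence, to converge.

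The main obstacle is twofold. The genuinely hard step is the nonlinear passage to the limit: one must verify the hypotheses of the velocity averaging lemma in the weighted, unbounded-velocity setting and check that the moments $\rho^\sigma,\bj^\sigma$ converge strongly despite the absence of any $\bx$- or $\bv$-compactness of $f^\sigma$ itself. The second delicate point is uniqueness: a direct Gronwall estimate for the difference $\bar h=f-g$ of two noiseless weak solutions fails, because the term $\int \omega\,g\,L[\bar h]\cdot\nabla_\bv\bar h\,d\bx\,d\bv$ can no longer be absorbed by a diffusion term when $\sigma=0$ (and indeed $\nabla_\bv\bar h$ need not lie in $L^2$ for a mere weak solution); this is precisely why uniqueness must be routed through the vanishing noise limit and the stability estimate for $\sigma>0$ rather than proved directly on \eqref{eq-kine-wtnois}.
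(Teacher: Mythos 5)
Your proposal follows essentially the same route as the paper: the $\sigma$-uniform weighted energy bound from Theorem \ref{tm-wek-nois}, weak-$\star$ extraction of a limit, the DiPerna--Lions averaging lemma (Lemma \ref{lm-vel}) combined with tail estimates coming from the weight $\omega$ (this is exactly how the paper upgrades $L^1_{loc}$ convergence of the velocity averages to global $L^1$ convergence of the first moment, hence a.e.\ convergence of $L[f^{\sigma_j}]$), lower semicontinuity for estimate (1), and passage to the limit in Lemma \ref{lm-stb-nois} for estimate (2). The one step you dismiss too quickly is the claim that $f\in C([0,+\infty),L^2(\omega))$ ``follows from the equation'': the equation only gives $f_t\in L^\infty((0,T),H^{-1}(\bbr^{2d}))$, hence continuity in the \emph{weak} $L^2$ topology, and the paper must then run a separate DiPerna--Lions-type mollification/commutator argument to show $t\mapsto\|f(t)\|_{L^2(\bbr^{2d})}^2$ is Lipschitz, which combined with weak continuity yields strong continuity; without some such argument this part of the conclusion is not justified.
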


\begin{theorem} \label{tm-str-vnois}
Assume $(1+\bv^2)^{\frac12}f_0(\bx, \bv)\in  X$. Then \eqref{eq-kine-wtnois} admits a unique strong solution $f(t,\bx, \bv)\in C([0,+\infty), X)$.
Moreover, it hold that
\begin{eqnarray*}
    &&(1)~\|(1+\bv^2)^{\frac12}f(t)\|_{X} \le \|(1+\bv^2)^{\frac12}f_0\|_{X} \exp\left(\int_0^t c(\tau)d \tau\right), \quad a.e.\ t \ge 0;\\
    &&(2)~\sup_{0\le t \le T}\|f(t)-g(t)\|_{X}\le \|f_0-g_0\|_{X} \exp\left(\int_0^T c(t)dt\right), \quad \forall T\ge 0,
  \end{eqnarray*}
where $f(t, \bx,\bv)$ and $g(t, \bx,\bv)$ are strong solutions with initial data $f_0$ and $g_0$ satisfying the above conditions, respectively.
\end{theorem}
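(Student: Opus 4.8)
The plan is to construct the solution as the vanishing noise limit $\sigma\to 0^{+}$ of the noisy solutions produced in the previous section, passing to the limit in the nonlinear coupling by means of the velocity averaging lemma. For each $\sigma\in(0,1]$, Theorem \ref{tm-str-nois} provides a strong solution $f^{\sigma}\in C([0,+\infty),X)$ to \eqref{eq-kine-nois} with the \emph{same} initial datum $f_{0}$. The crucial observation is that the a priori bound in Lemma \ref{lm-str-nois}(1) is \emph{uniform in} $\sigma$: the generic function $c(t)$ there is built only from the $\sigma$-independent quantities controlled in Lemma \ref{lm-prlm-nois}(1)--(2) (the conserved $L^{1}$ mass and the growth of $\|(1+\bv^{2})^{\frac12}f\|_{L^{1}}$) together with terms of the form $\sigma\|\cdot\|^{2}\le\|\cdot\|^{2}$, using $0\le\sigma\le1$. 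Consequently $\{(1+\bv^{2})^{\frac12}f^{\sigma}\}_{0<\sigma\le1}$ is bounded in $L^{\infty}_{loc}([0,+\infty);X)$ and, moreover, $\sqrt{\sigma}\,(1+\bv^{2})^{\frac12}\nabla_{\bv}f^{\sigma}$ is bounded in $L^{2}_{loc}$. I would then extract a subsequence with $f^{\sigma_{k}}\rightharpoonup f$ weakly-$\star$ in the appropriate weighted spaces.

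The heart of the argument is the passage to the limit in $\nabla_{\bv}\cdot(L[f^{\sigma}]f^{\sigma})$. Writing $L[f^{\sigma}]=\mathbf{b}[f^{\sigma}]-\bv\,a[f^{\sigma}]$, the coefficients are convolutions in $\bx$ of the kernel $\varphi$ with the velocity moments $\rho^{\sigma}=\int_{\bbr^{d}}f^{\sigma}\,d\bv$ and $\bj^{\sigma}=\int_{\bbr^{d}}f^{\sigma}\bv\,d\bv$, so that $L[f^{\sigma}]$ is controlled entirely by \emph{velocity averages} of $f^{\sigma}$. Viewing \eqref{eq-kine-nois} as a kinetic transport equation whose right-hand side $\sigma\Delta_{\bv}f^{\sigma}-\nabla_{\bv}\cdot(L[f^{\sigma}]f^{\sigma})$ is a first-order $\bv$-divergence of families bounded in $L^{2}_{loc}$ uniformly in $\sigma$ (note $\sigma\nabla_{\bv}f^{\sigma}=\sqrt{\sigma}\,(\sqrt{\sigma}\,\nabla_{\bv}f^{\sigma})\to 0$ in $L^{2}_{loc}$), I would invoke the velocity averaging lemma to deduce that $\rho^{\sigma_{k}}$ and $\bj^{\sigma_{k}}$ are relatively compact, hence converge \emph{strongly} in $L^{2}_{loc}$ of $(t,\bx)$. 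Convolving with the fixed kernel $\varphi$ upgrades this to $a[f^{\sigma_{k}}]\to a[f]$ and $\mathbf{b}[f^{\sigma_{k}}]\to\mathbf{b}[f]$ strongly, so the product of a strongly convergent coefficient with the weakly convergent $f^{\sigma_{k}}$ passes to the limit, giving $L[f^{\sigma_{k}}]f^{\sigma_{k}}\to L[f]f$ in $\mathcal{D}'$. Since $\sigma_{k}\to 0$ while $\sqrt{\sigma_{k}}\,\nabla_{\bv}f^{\sigma_{k}}$ stays bounded, the diffusion term $\sigma_{k}\Delta_{\bv}f^{\sigma_{k}}\to 0$ in $\mathcal{D}'$. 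Therefore $f$ solves \eqref{eq-kine-wtnois} in the sense of distributions.

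For the regularity and for estimate (1), the uniform $X$-bound is inherited by $f$ through the weak lower semicontinuity of the norms, which yields assertion (1); continuity in time $f\in C([0,+\infty),X)$ is then obtained as in the proof of Theorem \ref{tm-str-nois}, first establishing weak-in-time continuity from the equation and upgrading it to norm continuity via the energy identity. Hence $f$ is a strong solution to \eqref{eq-kine-wtnois}.

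Finally, for the stability estimate (2) and uniqueness I would \emph{not} attempt a direct $\sigma=0$ weighted energy estimate for the difference of two noiseless solutions. Instead I would apply Lemma \ref{lm-str-nois}(2) to the noisy approximations $f^{\sigma},g^{\sigma}$ of two limit solutions and let $\sigma\to 0$, using once more the weak lower semicontinuity of $\|\cdot\|_{X}$ to obtain
\[
 \sup_{0\le t\le T}\|f(t)-g(t)\|_{X}\le\|f_{0}-g_{0}\|_{X}\exp\Big(\int_{0}^{T}c(t)\,dt\Big),
\]
from which uniqueness is immediate. I expect the main obstacle to lie precisely here, in tandem with the compactness step: a direct energy estimate fails because closing it would require controlling $\nabla_{\bv}g$ in the full weight $\omega$, whereas the dissipation term $\sigma\|\nabla_{\bv}\bar{h}\|_{L^{2}(\omega)}^{2}$ that was used in Lemma \ref{lm-stb-nois} and Lemma \ref{lm-str-nois}(2) to absorb exactly such terms disappears when $\sigma=0$. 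Routing the stability through the vanishing noise limit circumvents this, but it concentrates the whole difficulty on verifying that the hypotheses of the velocity averaging lemma hold uniformly in $\sigma$ — in particular, that the nonlinear forcing is genuinely a $\bv$-divergence of an $L^{2}_{loc}$-bounded field, so that the velocity moments $\rho^{\sigma},\bj^{\sigma}$ gain strong compactness.
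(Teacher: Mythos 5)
Your proposal is correct and follows the paper's global strategy for this theorem: take the noisy strong solutions $f^{\sigma}$ with the same datum, observe that the bound of Lemma \ref{lm-str-nois}(1) is uniform in $\sigma\in(0,1]$, pass to the vanishing-noise limit, recover estimate (1) by weak lower semicontinuity, upgrade weak-in-time continuity to $C([0,+\infty),X)$ via mollified energy identities, and obtain stability (2) and uniqueness by letting $\sigma\to 0$ in the stability estimate for the noisy approximations rather than by a direct $\sigma=0$ energy estimate — your diagnosis of why the direct route fails (the dissipation term $\sigma\|\nabla_{\bv}\bar h\|_{L^2(\omega)}^2$ that absorbs the $\nabla_{\bv}\bar h$ contribution from $I_5$ disappears, and the $X$-norm controls $\nabla_{\bv}\bar h$ only in the unweighted $L^2$, not in $L^2(\omega)$) matches the structure of the paper. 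The one place you genuinely diverge is the compactness mechanism for the nonlinear term. You invoke the velocity averaging lemma to get strong $L^2_{loc}$ compactness of the moments $\rho^{\sigma},\bj^{\sigma}$ and hence of $L[f^{\sigma}]$; this is exactly what the paper does, but for the \emph{weak}-solution Theorem \ref{tm-wek-vnois}, where only the $L^2(\omega)$ bound is available. For the strong case the paper takes a lighter route: the uniform $X$-bound \eqref{eq-vsfw} controls $\bv\cdot\nabla_{\bx}f^{\sigma}$ and $\nabla_{\bv}\cdot(L[f^{\sigma}]f^{\sigma})$ in $L^2$, and $\sigma\Delta_{\bv}f^{\sigma}=\sqrt{\sigma}\,(\sqrt{\sigma}\,\Delta_{\bv}f^{\sigma})$ in $L^2_t L^2$, so $\partial_t f^{\sigma}$ is uniformly bounded in $L^2((0,T),L^2(\bbr^{2d}))$ and an Ascoli--Arzel\`a argument gives $f^{\sigma_j}\to f$ strongly in $C([0,T],L^2(\bbr^{2d}))$, after which the limit $L[f^{\sigma_j}]f^{\sigma_j}\to L[f]f$ is immediate. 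Your averaging route works (the hypotheses of Lemma \ref{lm-vel} hold with $m=1$ uniformly in $\sigma$, as you check, and the weight $\omega$ with $\alpha>3$ controls the tails), but it spends effort the extra regularity makes unnecessary; conversely it is the only available option one level down, in the weak-solution theorem. Both your argument and the paper's share the same residual caveat: the uniqueness obtained this way is, strictly speaking, uniqueness within the class of solutions produced as vanishing-noise limits.
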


In fact, Remark \ref{rm-cla} still holds as $\sigma \to 0$. The following velocity averaging lemma is due to \cite{diperna1989global}(Theorem 5 and Remark 3 of Theorem 3). We mainly use it to get some compactness from the weak solution sequence  to \eqref{eq-kine-nois}.
\begin{lemma}[DiPerna and Lions] \label{lm-vel}
Let $m \ge 0$, $f, g \in L^2(\bbr \times \bbr^{2d})$ and $f(t,\bx,\bv), g(t,\bx,\bv)$ satisfy
\[
  \frac{\partial f}{\partial t}+\bv \cdot \nabla_{\bx} f=\nabla_{\bv}^{\xi}g \quad \text{in} \ \mathcal{D}',
\]
where $\nabla_{\bv}^{\xi}=\partial_{\bv^1}^{\xi^1}\partial_{\bv^2}^{\xi^2}\cdots \partial_{\bv^d}^{\xi^d}$ and $|\xi|= \sum_{i=1}^{d} \xi^i=m$. Then for any $\phi(\bv) \in C_c^{\infty}(\bbr^{d})$, it holds that
\[
 \left\|\int_{\bbr^{d}}f(t,\bx,\bv) \phi(\bv)d \bv\right\|_{H^s(\bbr \times \bbr^{d})}\le C \left(\|f\|_{L^2(\bbr \times \bbr^{2d})}+\|g\|_{L^2(\bbr \times \bbr^{2d})} \right),
\]
where $s=\frac{1}{2(1+m)}$ and $C$ is a positive constant.
\end{lemma}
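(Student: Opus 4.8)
The statement is the classical velocity averaging lemma, and the plan is to reproduce its Fourier-analytic proof rather than merely cite it. First I would Fourier transform the relation $\partial_t f+\bv\cdot\nabla_{\bx}f=\nabla_{\bv}^{\xi}g$ in the variables $(t,\bx)$ only, writing $\hat f(\tau,k,\bv)$ and $\hat g(\tau,k,\bv)$ with $\tau\in\bbr$ dual to $t$ and $k\in\bbr^{d}$ dual to $\bx$. Since the Fourier transform in $(t,\bx)$ commutes with $\nabla_{\bv}$, the transport operator becomes multiplication by the symbol $i(\tau+\bv\cdot k)$ and the equation collapses to the algebraic identity
\[
 i(\tau+\bv\cdot k)\,\hat f(\tau,k,\bv)=\nabla_{\bv}^{\xi}\hat g(\tau,k,\bv).
\]
Writing $\rho_{\phi}(t,\bx)=\int_{\bbr^{d}}f\phi\,d\bv$ and using Plancherel, the goal reduces to the pointwise-in-$(\tau,k)$ bound $(1+\tau^2+|k|^2)^{s}|\hat\rho_{\phi}(\tau,k)|^2\lesssim\int_{\bbr^{d}}(|\hat f|^2+|\hat g|^2)\,d\bv$, which after integration in $(\tau,k)$ is exactly the desired inequality.

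The core estimate is obtained on the frequency region $|k|\ge1$ by a resonant/non-resonant decomposition of the velocity integral. For a threshold $\lambda>0$ to be optimized, I split the $\bv$-space into the resonant slab $A=\{\bv:|\tau+\bv\cdot k|\le\lambda\}$ and its complement. On $A$ one applies Cauchy--Schwarz together with the geometric fact that $A\cap\mathrm{supp}\,\phi$ lies in a slab of thickness $\sim\lambda/|k|$, so its measure is $\le C\lambda/|k|$, giving a contribution controlled by $(\lambda/|k|)\int|\hat f|^2\,d\bv$. On the complement I invert the symbol and integrate by parts $m$ times in $\bv$, transferring the $\xi$-derivatives off $\hat g$ and onto $\phi/(\tau+\bv\cdot k)$; each velocity derivative of the denominator produces a factor $\sim k/(\tau+\bv\cdot k)$, so Cauchy--Schwarz and the change of variable $w=\tau+\bv\cdot k$ bound this piece by $C|k|^{2m-1}\lambda^{-(2m+1)}\int|\hat g|^2\,d\bv$. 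Balancing the two contributions by choosing $\lambda\sim|k|^{m/(m+1)}(\int|\hat g|^2/\int|\hat f|^2)^{1/(2m+2)}$ and applying weighted AM--GM yields $|k|^{1/(m+1)}|\hat\rho_{\phi}|^2\lesssim\int(|\hat f|^2+|\hat g|^2)\,d\bv$, which is precisely the claimed exponent $s=\tfrac{1}{2(1+m)}$ in the $\bx$-frequency.

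It then remains to promote this $|k|^{2s}$-weighted bound to the full $(t,\bx)$ Sobolev weight $(1+\tau^2+|k|^2)^{s}$. I would split frequency space into three pieces. On the cone $|\tau|\le C|k|$ with $|k|\ge1$, the weight is comparable to $|k|^{2s}$ and the previous estimate applies directly. On the opposite cone $|\tau|>C|k|$ (where we may also assume $|\tau|\ge1$, the low-frequency corner being absorbed below), every $\bv\in\mathrm{supp}\,\phi$ is non-resonant since $|\tau+\bv\cdot k|\ge\tfrac12|\tau|$, so integrating by parts in $\bv$ gains a factor $|\tau|^{-1}$ and, because $s\le\tfrac12$ forces $|\tau|^{2s-2}\le1$, the corresponding integral is dominated by $\|g\|_{L^2}^2$. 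Finally, on the bounded set $\{\tau^2+|k|^2\le C\}$ the weight is bounded and the trivial bound $|\hat\rho_{\phi}|\le\|\phi\|_{L^2}(\int|\hat f|^2\,d\bv)^{1/2}$ suffices. Summing the three contributions gives the assertion.

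The main obstacle I anticipate is the non-resonant integration-by-parts bookkeeping for general $m$: one must track the full Leibniz expansion of $\nabla_{\bv}^{\xi}\big(\phi/(\tau+\bv\cdot k)\big)$, control every resulting term including those in which derivatives fall on $\phi$ rather than on the denominator, and check that the optimization over $\lambda$ is the same for the leading and lower-order terms so that the exponent $s=\tfrac{1}{2(1+m)}$ comes out uniform and sharp. By contrast, the geometric slab estimate and the gluing of the three frequency regimes are comparatively routine once the $\lambda$-optimization has been set up correctly.
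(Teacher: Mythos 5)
The paper does not actually prove this lemma: it is imported verbatim from DiPerna and Lions \cite{diperna1989global} (Theorem 5 and Remark 3 of Theorem 3) and used as a black box in Section 4, so any proof you give is necessarily a different route from the paper's. What you propose is the classical Fourier-analytic proof of velocity averaging, and your outline of it is essentially correct: the reduction via the Fourier transform in $(t,\bx)$ to the algebraic identity $i(\tau+\bv\cdot k)\hat f=\nabla_{\bv}^{\xi}\hat g$, the resonant/non-resonant split with the slab-measure bound $C\lambda/|k|$, the non-resonant bound $C|k|^{2m-1}\lambda^{-(2m+1)}\int|\hat g|^2\,d\bv$, the optimization $\lambda\sim|k|^{m/(m+1)}$ times the ratio of the $\hat g$ and $\hat f$ masses to the power $1/(2m+2)$, and the three-region gluing (cone $|\tau|\le C|k|$, cone $|\tau|>C|k|$ where compactness of $\mathrm{supp}\,\phi$ forces $|\tau+\bv\cdot k|\ge\tfrac12|\tau|$, and the bounded low-frequency corner) all check out and deliver exactly $s=\tfrac{1}{2(1+m)}$. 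This buys the paper a self-contained statement where it currently offers only a citation. One genuine technical point you should not gloss over: for $m\ge1$ the integration by parts on the non-resonant set cannot be performed against the sharp indicator of $\{|\tau+\bv\cdot k|>\lambda\}$, because the resulting boundary integrals involve traces of $\hat g$ on the hyperplanes $\{\tau+\bv\cdot k=\pm\lambda\}$, which are not controlled by the $L^2$ norm of $\hat g$ in $\bv$. The standard fix is to replace the indicator by a smooth cutoff $\chi\bigl((\tau+\bv\cdot k)/\lambda\bigr)$ (or the algebraic partition of unity $1=\frac{\lambda^2}{\lambda^2+(\tau+\bv\cdot k)^2}+\frac{(\tau+\bv\cdot k)^2}{\lambda^2+(\tau+\bv\cdot k)^2}$); each derivative of the cutoff costs a factor comparable to the one you already track from differentiating the denominator, so the Leibniz bookkeeping and the choice of $\lambda$ are unchanged. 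With that adjustment your proposal is a correct proof of the cited result.
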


Denote the solution to \eqref{eq-kine-nois} by $f^{\sigma}(t,\bx, \bv)$. Then we present the proof of the above two theorems.
\vskip 0.3cm
\noindent \textit{Proof of Theorem \ref{tm-wek-vnois}.} According to Theorem \ref{tm-wek-nois}, we know
\begin{equation} \label{eq-vwel2estn}
 \begin{aligned}
  &\|(1+\bv^2)^{\frac12}f^{\sigma}(t)\|_{L^2(\omega)}^2+\sigma \int_0^t \|(1+\bv^2)^{\frac12}\nabla_{\bv}f^{\sigma}(\tau)\|_{L^2(\omega)}^2 d \tau \\
      \le& \|(1+\bv^2)^{\frac12}f_0\|_{L^2(\omega)}^2 \exp\left(\int_0^t c(\tau)d \tau\right), \quad a.e.\ t \ge 0.
 \end{aligned}
\end{equation}
Thus there exists a sequence $\{f^{\sigma_j}(t, \bx, \bv)\}$ such that
\begin{equation} \label{eq-wcov}
 f^{\sigma_j}(t, \bx, \bv) \rightharpoonup f(t, \bx, \bv) \quad  \text{weakly-$\star$ in $L^{\infty}((0,T), L^2(\bbr^{2d}))\ \forall T>0$, as $\sigma_j \to 0$}.
\end{equation}
This also implies that
\begin{equation} \label{eq-wl2d}
 \int_0^T\int_{\bbr^{2d}}(1+\bv^2)^{\frac12}\omega^{\frac12}f^{\sigma_j}\psi d\bx d\bv dt \to \int_0^T\int_{\bbr^{2d}}(1+\bv^2)^{\frac12}\omega^{\frac12}f\psi d\bx d\bv dt,\quad \text{as $\sigma_j \to 0$},
\end{equation}
for any $\psi(t, \bx, \bv) \in C_c^{\infty}((0,T)\times \bbr^{2d})$.
Thus we have
\begin{equation} \label{eq-vwfd}
 (1+\bv^2)^{\frac12}\omega^{\frac12}f^{\sigma_j} \to (1+\bv^2)^{\frac12}\omega^{\frac12}f \quad \text{in $\mathcal{D}'$, as $\sigma_j \to 0$}.
\end{equation}
Combining with \eqref{eq-vwel2estn} and using the uniqueness of limit in the sense of distributions,
we also have
\begin{equation} \label{eq-vwfl}
 (1+\bv^2)^{\frac12}\omega^{\frac12}f^{\sigma_j} \rightharpoonup (1+\bv^2)^{\frac12}\omega^{\frac12}f,
\end{equation}
weakly-$\star$ in $L^{\infty}((0,T), L^2(\bbr^{2d}))$ $\forall T>0$, as $\sigma_j \to 0$.
Therefore, we have
\begin{equation} \label{eq-vwfles}
 \textrm{ess}\sup_{0\le t \le T} \|(1+\bv^2)^{\frac12}f(t)\|_{L^2(\omega)} \le \|(1+\bv^2)^{\frac12}f_0\|_{L^2(\omega)} \exp\left(\int_0^T c(t)d t\right),  \quad \forall T>0.
\end{equation}
Next we prove
\[
 L[f^{\sigma_j}]f^{\sigma_j} \to L[f]f \quad \text{in $\mathcal{D}'((0,T)\times \bbr^{2d})$, as $\sigma_j \to 0$}.
\]
We write \eqref{eq-kine-nois} in the form of
\begin{equation} \label{eq-kine-noisrefm}
     \frac{\partial f^{\sigma_j}}{\partial t}  + \bv \cdot \nabla_{\bx} f^{\sigma_j}=\nabla_{\bv} \cdot \Big(\sigma_j \nabla_{\bv} f^{\sigma_j}-L[f^{\sigma_j}]f^{\sigma_j}\Big).
\end{equation}
Using Lemma \ref{lm-vel} and combining with \eqref{eq-vwel2estn}, we have
\[
 \left\|\int_{\bbr^{d}}f(t,\bx,\bv) \phi(\bv)d \bv\right\|_{H^{\frac14}([0,T] \times \bbr^{d})}\le C, \quad \forall \phi(\bv) \in C_c^{\infty}(\bbr^{d}).
\]
Since
\[
 H^{\frac14}\left([0,T]\times K \right) \hookrightarrow \hookrightarrow L^1\left([0,T]\times K\right), \quad \text{$\forall K$ compact in $\bbr^{d}$},
\]
there exists a subsequence, still denoted by $\{f^{\sigma_j}\}$, such that
\begin{equation} \label{eq-vloc}
 \int_{\bbr^{d}} f^{\sigma_j}\phi(\bv)d\bv \to \int_{\bbr^{d}} f \phi(\bv)d\bv \quad \text{in $L_{loc}^1((0,T)\times \bbr^{d})$, as $\sigma_j \to 0$}.
\end{equation}
For any $\e >0$, if we choose $R$ suitably large, it hold that
\begin{equation} \label{eq-vtrv}
 \begin{aligned}
  &\int_0^T \int_{\bbr^{d}}\left|\int_{|\bv|>R}(1+\bv^2)^{\frac12}f^{\sigma_j}d\bv \right|d\bx dt\\
  \le&\left(\int_0^T \int_{\bbr^{d}}\int_{|\bv|>R}\omega^{-1} d\bv d\bx dt\right)^{\frac12}\left(\int_0^T \|(1+\bv^2)^{\frac12}f^{\sigma_j}(t)\|_{L^2(\omega)}^2 dt\right)^{\frac12}\\
  \le&C(T)\e
 \end{aligned}
\end{equation}
and
\begin{equation} \label{eq-vtrx}
 \begin{aligned}
  &\int_0^T \int_{|\bx|>R}\left|\int_{\bbr^{d}}(1+\bv^2)^{\frac12}f^{\sigma_j}d\bv \right|d\bx dt\\
  \le&\left(\int_0^T \int_{|\bx|>R}\int_{\bbr^{d}}\omega^{-1} d\bv d\bx dt\right)^{\frac12}\left(\int_0^T \|(1+\bv^2)^{\frac12}f^{\sigma_j}(t)\|_{L^2(\omega)}^2 dt\right)^{\frac12}\\
  \le&C(T)\e.
 \end{aligned}
\end{equation}
Combining \eqref{eq-vloc}, \eqref{eq-vtrv} and \eqref{eq-vtrx}, we have
\begin{equation} \label{eq-vglo}
 \int_{\bbr^{d}} (1+\bv^2)^{\frac12}f^{\sigma_j}d\bv \to \int_{\bbr^{d}} (1+\bv^2)^{\frac12}f d\bv \quad \text{in $L^1((0,T)\times \bbr^{d})$, as $\sigma_j \to 0$}.
\end{equation}
It follows from \eqref{eq-vglo} that there exists a subsequence, still denoted by $\{f^{\sigma_j}\}$, such that
\begin{equation} \label{eq-vlae}
 L[f^{\sigma_j}] \to L[f], \quad\text{a.e. \ $(t, \bx, \bv)\in [0,T]\times \bbr^{2d}$, as $\sigma_j \to 0$}.
\end{equation}
Due to \eqref{eq-vwel2estn}, it is easy to see
\begin{equation} \label{eq-vlbd}
 \left| \int_{\bbr^{2d}} \varphi(|\bx-\by|)f^{\sigma_j}(t, \by,\bv^*)(1+|\bv^*|^2)^{\frac12}d \by d\bv^*\right|\le C(T), \quad \forall t \in [0,T].
\end{equation}
Combining \eqref{eq-vloc}, \eqref{eq-vlae} and \eqref{eq-vlbd}, we deduce that
\begin{equation} \label{eq-vdist}
 \int_0^T \int_{\bbr^{2d}}L[f^{\sigma_j}]f^{\sigma_j}\phi_1(\bv)\phi_2(t,\bx)d\bv d\bx dt \to \int_0^T \int_{\bbr^{2d}}L[f]f\phi_1(\bv)\phi_2(t,\bx)d\bv d\bx dt,
\end{equation}
for any $\phi_1(\bv) \in C_c^{\infty}(\bbr^{d})$, $\phi_2(t, \bx) \in C_c^{\infty}((0,T)\times \bbr^{d})$, as $\sigma_j \to 0$. Using the density of the sums of the function with the form $\phi_1(\bv)\phi_2(t,\bx)$ in $C_c^{\infty}((0,T)\times \bbr^{2d})$, we have
\[
 L[f^{\sigma_j}]f^{\sigma_j} \to L[f]f \quad \text{in $\mathcal{D}'((0,T)\times \bbr^{2d})$, as $\sigma_j \to 0$}.
\]
Therefore,
\begin{equation} \label{eq-vwek}
 \frac{\partial f}{\partial t}  + \bv \cdot \nabla_{\bx} f +\nabla_{\bv} \cdot (L[f]f)=0 \quad \text{in $\mathcal{D}'((0,T)\times \bbr^{2d})$}.
\end{equation}
 Then we prove $f \in C([0,T], L^2(\omega))$. Since $(1+\bv^2)^{\frac12}f \in L^{\infty}((0,T), L^2(\omega))$, by the interpolation, we only need to prove $f \in C([0,T], L^2(\bbr^{2d}))$.
From \eqref{eq-vwek} and \eqref{eq-vwfles}, we know
\begin{equation} \label{eq-vft}
 f_t \in L^{\infty}((0,T), H^{-1}(\bbr^{2d}))
\end{equation}
Combining with the fact that $f \in L^{\infty}((0,T), L^{2}(\bbr^{2d}))$, we know
\begin{equation} \label{eq-vfwc}
 f \in C([0,T], L^{2}(\bbr^{2d})-W),
\end{equation}
which means $f$ is continuous in $[0,T]$ with respect to the weak topology in $L^{2}(\bbr^{2d})$.
In the following, we prove $\|f(t)\|_{L^{2}(\bbr^{2d})} \in C[0,T]$.

Take the standard mollifier $j_{\e}(\bx-\cdot, \bv-\cdot)$ as the test function in \eqref{eq-vwek}. Denoting $f*j_{\e}(\bx, \bv)$ by $\langle f \rangle_{\e}$, we have
\begin{equation} \label{eq-vtes}
 \begin{aligned}
 &\left(\langle f \rangle_{\e}\right)_t+\bv \cdot \nabla_{\bx}\langle f \rangle_{\e}+\nabla_{\bv} \cdot \left(L[f]\langle f \rangle_{\e}\right)\\
 =&-\nabla_{\bx}\cdot \langle \bv f \rangle_{\e}+\bv \cdot \nabla_{\bx}\langle f \rangle_{\e}-\nabla_{\bv} \cdot \langle L[f] f \rangle_{\e}+\nabla_{\bv} \cdot \left(L[f]\langle f \rangle_{\e}\right)
 \end{aligned}
\end{equation}
Multiplying \eqref{eq-vtes} by $2\langle f \rangle_{\e}$ yields
\begin{equation} \label{eq-vteg}
 \begin{aligned}
   &\frac{\partial}{\partial t}\langle f \rangle_{\e}^2 + \bv \cdot \nabla_{\bx} \langle f \rangle_{\e}^2 +\nabla_{\bv} \cdot \left(L[f]\langle f \rangle_{\e}^2\right)\\
   =&-\langle f \rangle_{\e}^2 \nabla_{\bv} \cdot L[f]-2\left[\nabla_{\bx}\cdot \langle \bv f \rangle_{\e}-\bv \cdot \nabla_{\bx}\langle f \rangle_{\e} \right]\cdot \langle f \rangle_{\e}\\
   &-2\left[ \nabla_{\bv} \cdot \langle L[f] f \rangle_{\e}-\nabla_{\bv} \cdot \left(L[f]\langle f \rangle_{\e}\right)\right]\cdot \langle f \rangle_{\e}
 \end{aligned}
\end{equation}
Integrating \eqref{eq-vteg} over $\bbr^{2d}$, we obtain
\begin{equation} \label{eq-vfe2int}
 \begin{aligned}
 \frac{d}{d t}\|\langle f \rangle_{\e}\|_{L^2(\bbr^{2d})}^2=&-\int_{\bbr^{2d}}\langle f \rangle_{\e}^2 \nabla_{\bv} \cdot L[f] d\bx d\bv\\
 &-\int_{\bbr^{2d}}2\left[\nabla_{\bx}\cdot \langle \bv f \rangle_{\e}-\bv \cdot \nabla_{\bx}\langle f \rangle_{\e} \right]\cdot \langle f \rangle_{\e} d\bx d\bv\\
 &-\int_{\bbr^{2d}}2\left[ \nabla_{\bv} \cdot \langle L[f] f \rangle_{\e}-\nabla_{\bv} \cdot \left(L[f]\langle f \rangle_{\e}\right)\right]\cdot \langle f \rangle_{\e} d\bx d\bv\\
 =&\sum_{i=1}^{3}K_i.
 \end{aligned}
\end{equation}
We estimate each $K_i(1\le i \le 3)$ as follows.
\[
 |K_1|\le C\|f\|_{L^1(\bbr^{2d})}\|f\|_{L^2(\bbr^{2d})}^2,
\]
\begin{equation*}
 \begin{aligned}
 |K_2|=&2\left|\int_{\bbr^{2d}}\int_{\bbr^{2d}}(\bw-\bv)\cdot \nabla_{\bx}j_{\e}(\bx-\bz, \bv-\bw)f(t, \bz, \bw)d\bz d\bw \langle f \rangle_{\e}d\bx d\bv\right|\\
 \le&2C\|f\|_{L^2(\bbr^{2d})}^2,
 \end{aligned}
\end{equation*}
where we have used the fact
\[
 |\bw-\bv|\le \e \quad \text{and}\quad \|\nabla_{\bx}j_{\e}\|_{L^1(\bbr^{2d})}\le \frac{C}{\e}.
\]
\begin{equation*}
 \begin{aligned}
 |K_3|=&2\left|\int_{\bbr^{2d}}\nabla_{\bv}\cdot \int_{\bbr^{2d}} \Big (L[f](\bz,\bw)-L[f](\bx,\bv)\Big)j_{\e}(\bx-\bz, \bv-\bw)f(t, \bz, \bw)d\bz d\bw \langle f \rangle_{\e}d\bx d\bv \right|\\
 \le&2C\|f\|_{L^1(\bbr^{2d})}\|f\|_{L^2(\bbr^{2d})}^2\\
 &+2\left|\int_{\bbr^{2d}} \int_{\bbr^{2d}} \Big (L[f](\bz,\bw)-L[f](\bx,\bv)\Big)\cdot\nabla_{\bv}j_{\e}(\bx-\bz, \bv-\bw)f(t, \bz, \bw)d\bz d\bw \langle f \rangle_{\e}d\bx d\bv \right|\\
 \le& 2C\|f\|_{L^1(\bbr^{2d})}\|f\|_{L^2(\bbr^{2d})}^2+2C\|(1+\bv^2)^{\frac12}f\|_{L^1(\bbr^{2d})}
 \|(1+\bv^2)^{\frac12}f\|_{L^2(\bbr^{2d})}\|f\|_{L^2(\bbr^{2d})}
 \end{aligned}
\end{equation*}
Substituting these estimates into \eqref{eq-vfe2int} and integrating over $[t_1, t_2]$, $\forall t_1, t_2 \in [0,T]$, we obtain
\[
 \left| \|\langle f(t_2) \rangle_{\e}\|_{L^2(\bbr^{2d})}^2-\|\langle f(t_1) \rangle_{\e}\|_{L^2(\bbr^{2d})}^2 \right|\le C(T)|t_2-t_1|,\quad \forall t_1, t_2 \in [0,T].
\]
where we have used \eqref{eq-vwfles}. Letting $\e \to 0$, we have
\begin{equation} \label{eq-vnctiu}
  \left| \| f(t_2)\|_{L^2(\bbr^{2d})}^2-\| f(t_1)\|_{L^2(\bbr^{2d})}^2 \right|\le C(T)|t_2-t_1|,\quad \forall t_1, t_2 \in [0,T].
\end{equation}
Combining \eqref{eq-vfwc} with \eqref{eq-vnctiu}, we have
\begin{equation} \label{eq-vfsc}
  f \in C([0,T], L^2(\bbr^{2d})),\quad T\ge 0.
\end{equation}
Due to the arbitrariness of $T$, from \eqref{eq-vwfles} and \eqref{eq-vfsc}, we have $f \in C([0,\infty), L^2(\omega))$ and
\[
 \|(1+\bv^2)^{\frac12}f(t)\|_{L^2(\omega)} \le \|(1+\bv^2)^{\frac12}f_0\|_{L^2(\omega)} \exp\left(\int_0^t c(\tau)d \tau\right), \quad a.e.\ t \ge 0
\]
Similarly, we can prove
\[
 \omega^{\frac12}f^{\sigma_j} \rightharpoonup \omega^{\frac12}f,
\quad \text{
weakly-$\star$ in $L^{\infty}((0,T), L^2(\bbr^{2d}))\ \forall T>0$, as $\sigma_j \to 0$}
\]
and
\[
 \omega^{\frac12}g^{\sigma_j} \rightharpoonup \omega^{\frac12}g,
\quad \text{
weakly-$\star$ in $L^{\infty}((0,T), L^2(\bbr^{2d}))\ \forall T>0$, as $\sigma_j \to 0$},
\]
for solution sequences $\{f^{\sigma_j}\}$ and $\{g^{\sigma_j}\}$ with initial data $f_0$ and $g_0$, respectively. Therefore, we have
\[
 \sup_{0\le t \le T}\|f(t)-g(t)\|_{L^2(\omega)}\le \|f_0-g_0\|_{L^2(\omega)}\exp\left(\int_0^T c(t)dt\right), \quad \forall T\ge 0,
\]
which implies uniqueness of the weak solution. This completes the proof.
\vskip 0.3cm
\noindent \textit{Proof of Theorem \ref{tm-str-vnois}}. According to Theorem \ref{tm-str-nois}, we have
\begin{equation} \label{eq-vsfw}
 \begin{aligned}
  &\|(1+\bv^2)^{\frac12}f^{\sigma}(t)\|_{X}^2+\sigma \int_0^t \|(1+\bv^2)^{\frac12}\nabla_{\bv}f^{\sigma}(\tau)\|_{X}^2 d \tau \\
     \le& \|(1+\bv^2)^{\frac12}f_0\|_{X}^2 \exp\left(\int_0^t c(\tau)d \tau\right), \quad a.e.\ t \ge 0.
 \end{aligned}
\end{equation}
From \eqref{eq-vsfw} and the equation \eqref{eq-kine-nois}, we know $f^{\sigma}$ are uniformly bounded in $L^{\infty}((0,T), L^2(\bbr^{2d}))$, and $\frac{\partial f^{\sigma}}{\partial t}$ are uniformly bounded in $L^{2}((0,T), L^2(\bbr^{2d}))$, $\forall T>0$.
It follows from the Ascoli-Arzela theorem that there exists a sequence $\{f^{\sigma_j}\}$ such that
\begin{equation} \label{eq-vscov}
 f^{\sigma_j}(t, \bx, \bv) \to f(t, \bx, \bv) \quad \text{in $C([0,T], L^2(\bbr^{2d}))$, as $\sigma_j \to 0$}.
\end{equation}
It is easy to see $f(t, \bx, \bv)$ is a weak solution to \eqref{eq-kine-wtnois}. Similarly as the proof in Theorem \ref{tm-wek-vnois}, we can show
\[
 (1+\bv^2)^{\frac12}f^{\sigma_j} \rightharpoonup (1+\bv^2)^{\frac12}f,\quad \text{weakly-$\star$ in $L^{\infty}((0,T), L^2(\omega))$},
\]
\[
 (1+\bv^2)^{\frac12}\nabla_{\bx} f^{\sigma_j} \rightharpoonup (1+\bv^2)^{\frac12}\nabla_{\bx}f,\quad \text{weakly-$\star$ in $L^{\infty}((0,T), L^2(\nu))$},
\]
and
\[
 (1+\bv^2)^{\frac12}\nabla_{\bv} f^{\sigma_j} \rightharpoonup (1+\bv^2)^{\frac12}\nabla_{\bv}f,\quad \text{weakly-$\star$ in $L^{\infty}((0,T), L^2(\bbr^{2d}))$}
\]
for any $T>0$, as $\sigma_j \to 0$. Thus we have
\begin{equation} \label{eq-vsfles}
 \textrm{ess}\sup_{0\le t \le T} \|(1+\bv^2)^{\frac12}f(t)\|_{X} \le \|(1+\bv^2)^{\frac12}f_0\|_{X} \exp\left(\int_0^T c(\tau)d \tau\right), \quad \forall T>0.
\end{equation}
Next we prove $f(t, \bx, \bv) \in C([0,T], X)\ \forall T>0$. In fact, using \eqref{eq-vsfles} and the interpolation, we only need to prove
\[
 f(t, \bx, \bv) \in C([0,T], W^{1,2}(\bbr^{2d})), \quad \forall T>0.
\]
Similarly as the proof in Theorem \ref{tm-wek-vnois}, we can easily show that
\begin{equation} \label{eq-vswctu}
 f(t, \bx, \bv) \in C([0,T], W^{1,2}(\bbr^{2d})-W), \quad \forall T>0,
\end{equation}
which means that $f$ is continuous in $[0,T]$ with respect to the weak topology in $W^{1,2}(\bbr^{2d})$. The following proof is devoted to demonstrating  that
\[
 \|f(t)\|_{W^{1,2}(\bbr^{2d})} \in C[0,T], \quad \forall T>0.
\]
Based on Theorem \ref{tm-wek-vnois}, we just prove that $\|\nabla_{\bx}f\|_{L^{2}(\bbr^{2d})}$ and $\|\nabla_{\bv}f\|_{L^{2}(\bbr^{2d})}$ are in $ C[0,T], \forall T>0.$
Since
\[
(\nabla_{\bx} f)_t +\bv \cdot \nabla_{\bx} (\nabla_{\bx} f)+ \nabla_{\bv} \cdot (L[f]\otimes \nabla_{\bx} f)=-\nabla_{\bx}L[f] \cdot \nabla_{\bv} f- f\nabla_{\bx} \nabla_{\bv} \cdot L[f]  \quad \text{in $ \mathcal{D}'((0,T) \times \bbr^{2d})$},
\]
taking the standard mollifier $j_{\e}(\bx-\cdot, \bv-\cdot)$ as the test function yields
\begin{equation} \label{eq-vstes}
 \begin{aligned}
  &(\langle \nabla_{\bx} f\rangle_{\e})_t +\bv \cdot \nabla_{\bx} \langle\nabla_{\bx} f\rangle_{\e}+ \nabla_{\bv} \cdot (L[f]\otimes \langle\nabla_{\bx} f\rangle_{\e})\\
  =&\langle-\nabla_{\bx}L[f] \cdot \nabla_{\bv} f- f\nabla_{\bx} \nabla_{\bv} \cdot L[f]\rangle_{\e}\\
  &-[\nabla_{\bx} \cdot \langle\bv \otimes \nabla_{\bx}f\rangle_{\e}-\bv \cdot \nabla_{\bx} \langle\nabla_{\bx} f\rangle_{\e}]\\
  &-[\nabla_{\bv}\cdot \langle L[f]\otimes \nabla_{\bx} f\rangle_{\e}-\nabla_{\bv} \cdot (L[f]\otimes \langle\nabla_{\bx} f\rangle_{\e})]
 \end{aligned}
\end{equation}
Multiplying \eqref{eq-vstes} by $2\langle \nabla_{\bx} f\rangle_{\e}$ and integrating the resulting equation over $\bbr^{2d}$, we obtain
\begin{equation} \label{eq-vstesint}
 \begin{aligned}
   \frac{d}{dt}\|\langle \nabla_{\bx} f\rangle_{\e}\|_{L^2(\bbr^{2d})}^{2}
   =&-\int_{\bbr^{2d}}\langle \nabla_{\bx} f\rangle_{\e}^2 \nabla_{\bv} \cdot L[f] d\bx d\bv\\
   &-2\int_{\bbr^{2d}}\langle \nabla_{\bx} f\rangle_{\e} \cdot\langle\nabla_{\bx}L[f] \cdot \nabla_{\bv} f+ f\nabla_{\bx} \nabla_{\bv} \cdot L[f]\rangle_{\e} d\bx d\bv\\
   &-2\int_{\bbr^{2d}}[\nabla_{\bx} \cdot \langle\bv \otimes \nabla_{\bx}f\rangle_{\e}-\bv \cdot \nabla_{\bx} \langle\nabla_{\bx} f\rangle_{\e}]\cdot \langle \nabla_{\bx} f\rangle_{\e} d\bx d\bv\\
   &-2\int_{\bbr^{2d}} [\nabla_{\bv}\cdot \langle L[f]\otimes \nabla_{\bx} f\rangle_{\e}-\nabla_{\bv} \cdot (L[f]\otimes \langle\nabla_{\bx} f\rangle_{\e})]\cdot \langle \nabla_{\bx} f\rangle_{\e} d\bx d\bv\\
   =&\sum_{i=1}^{4}H_i
 \end{aligned}
\end{equation}
We estimate each $H_i(1\le i \le 4)$ as follows.
\[
|H_1|\le \|\nabla_{\bv} \cdot L[f]\|_{L^{\infty}(\bbr^{2d})}\|\langle \nabla_{\bx} f\rangle_{\e}\|_{L^2(\bbr^{2d})}^{2}\le C\|f\|_{L^1(\bbr^{2d})}\|\nabla_{\bx} f\|_{L^2(\bbr^{2d})}^{2},
\]
\begin{equation*}
 \begin{aligned}
 |H_2|\le&\|\nabla_{\bx} f\|_{L^2(\bbr^{2d})}\|\nabla_{\bx}L[f] \cdot \nabla_{\bv} f+ f\nabla_{\bx} \nabla_{\bv} \cdot L[f]\|_{L^2(\bbr^{2d})}\\
 \le&C\|\nabla_{\bx} f\|_{L^2(\bbr^{2d})}\Big( \|(1+\bv^2)^{\frac12}f\|_{L^1(\bbr^{2d})}\|(1+\bv^2)^{\frac12}\nabla_{\bv}f\|_{L^2(\bbr^{2d})}+\|f\|
 _{L^1(\bbr^{2d})}\|f\|_{L^2(\bbr^{2d})}\Big),
 \end{aligned}
\end{equation*}
\begin{equation*}
 \begin{aligned}
 |H_3|\le&2\int_{\bbr^{2d}}\left|\int_{\bbr^{2d}}(\bw-\bv)\cdot \nabla_{\bx}j_{\e}(\bx-\bz, \by-\bw)\nabla_{\bx}f(t,\bz, \bw)d\bz d\bw\right||\langle \nabla_{\bx} f\rangle_{\e}|d\bx d\bv\\
 \le&C\|\nabla_{\bx} f\|_{L^2(\bbr^{2d})}^2,
 \end{aligned}
\end{equation*}
\begin{equation*}
 \begin{aligned}
 |H_4|\le&2\int_{\bbr^{2d}}\Bigg|\int_{\bbr^{2d}}\nabla_{\bv}\cdot L[f] \nabla_{\bx}f(t,\bz, \bw)j_{\e}(\bx-\bz, \by-\bw) d\bz d\bw \cdot \langle \nabla_{\bx} f\rangle_{\e}\Bigg|d\bx d\bv\\
 &+2\int_{\bbr^{2d}}\Bigg|\int_{\bbr^{2d}}\nabla_{\bv}j_{\e}(\bx-\bz, \by-\bw)\cdot \left[(L[f](\bz, \bw)-L[f](\bx, \bv))\otimes \nabla_{\bx}f(t,\bz, \bw)\right]\\
 &\qquad \qquad \quad d\bz d\bw\cdot \langle \nabla_{\bx} f\rangle_{\e}\Bigg|d\bx d\bv\\
 \le&C\|f\|_{L^1(\bbr^{2d})}\|\nabla_{\bx} f\|_{L^2(\bbr^{2d})}^2+ C\|(1+\bv^2)^{\frac12}f\|_{L^1(\bbr^{2d})}\|(1+\bv^2)^{\frac12}\nabla_{\bx}f\|_{L^2(\bbr^{2d})}
 \|\nabla_{\bx} f\|_{L^2(\bbr^{2d})},
 \end{aligned}
\end{equation*}
Substituting these estimates into \eqref{eq-vstesint} and integrating the resulting equation over $(t_1, t_2)$, $\forall t_1, t_2 \in [0,T]$, then letting $\e \to 0$, we obtain
\begin{equation} \label{eq-vsncont}
 \left| \|\nabla_{\bx}f(t_2)\|_{L^2(\bbr^{2d})}^2-\|\nabla_{\bx}f(t_1)\|_{L^2(\bbr^{2d})}^2\right|\le C(T)|t_2-t_1|,
\end{equation}
where we have used \eqref{eq-vsfles}. Combining \eqref{eq-vswctu} and \eqref{eq-vsncont}, we know
\[
 \nabla_{\bx}f \in C([0,T], L^2(\bbr^{2d}), \quad \forall T>0.
\]
Similarly we can prove $\nabla_{\bv}f \in C([0,T], L^2(\bbr^{2d}), \quad \forall T>0$. Due to the arbitrariness of $T$ and \eqref{eq-vsfles}, we can easily prove $f \in C([0,+\infty), X)$ by interpolation and
\[
 \|(1+\bv^2)^{\frac12}f(t)\|_{X} \le \|(1+\bv^2)^{\frac12}f_0\|_{X} \exp\left(\int_0^t c(\tau)d \tau\right), \quad a.e.\ t \ge 0.
\]
As for the solution sequences $\{f^{\sigma_j}\}$ and $\{g^{\sigma_j}\}$ with initial data $f_0$ and $g_0$, respectively, we have
\[
f^{\sigma_j} \rightharpoonup f,
\quad \text{
weakly-$\star$ in $L^{\infty}((0,T), X)\ \forall T>0$, as $\sigma_j \to 0$}
\]
and
\[
 g^{\sigma_j} \rightharpoonup g,
\quad \text{
weakly-$\star$ in $L^{\infty}((0,T), X)\ \forall T>0$, as $\sigma_j \to 0$},
\]
Thus we have
\[
 \sup_{0\le t \le T}\|f(t)-g(t)\|_{X}\le \|f_0-g_0\|_{X} \exp\left(\int_0^T c(t)dt\right), \quad \forall T\ge 0,
\]
which amounts to uniqueness of the strong solution. This completes the proof.

\begin{equation} \label{eq}
 \begin{aligned}
 \end{aligned}
\end{equation}
%%%%%%%%%%%%%%%%%%%%%%%%%%%%%%%%%%%%%%%%%%%%%%%%%%%%%%%%%%%%%%%%%%%%%%%%%%%%%%%%%%%%%%%%%%%%%%%%%%%%%%%%%%%
%
%                                Sec5. Conclusion
%
%%%%%%%%%%%%%%%%%%%%%%%%%%%%%%%%%%%%%%%%%%%%%%%%%%%%%%%%%%%%%%%%%%%%%%%%%%%%%%%%%%%%%%%%%%%%%%%%%%%%%%%%%%%
\section{Conclusion}
In this paper, we have developed a frame that can be used to established the well-posedness of weak, strong and classical solutions to the kinetic Cucker-Smale model with or without noise, no matter whether the initial data have compact support or not. Besides, we also rigorously justify the vanishing noise limit, which can be as a counterpart result to the vanishing viscosity method in hyperbolic conservation laws. Therefore we present complete theory for the kinetic Cucker-Smale model except the large-time behavior of the solution.

Our proof is based on weighted energy estimates and subtle compact analysis. The two weighted Hilbert spaces we introduced and the velocity averaging lemma in kinetic theory play important roles in our analysis. However, the time-asymptotic behavior of the solution is difficult to analyze. Maybe we can begin with some special situations. As for the kinetic Cucker-Smale model with noise, we guess the solution will tend to its steady state, if the initial perturbations are suitably small. This problem will be pursued in our future.
\bibliographystyle{plain}
\nocite{*}

\end{document}